\documentclass{amsart}

\usepackage{graphicx} % Required for inserting images
\usepackage{amsmath}
\usepackage{amsthm}
\usepackage{amssymb}
\usepackage{xfrac}
\usepackage{faktor}
\usepackage{mathtools}
\usepackage{mathrsfs}
\usepackage{tikz}
\usepackage{framed}
\usepackage{upgreek}
\usepackage{bbm}

\usepackage{enumitem}
\setlist[enumerate]{label=\emph{\roman*})}

\newtheorem{theorem}{Theorem}[section]
\newtheorem{lemma}[theorem]{Lemma}
\newtheorem{proposition}[theorem]{Proposition}

\newtheorem*{theorem*}{Theorem}
\theoremstyle{definition}
\newtheorem{definition}[theorem]{Definition}

\theoremstyle{remark}
\newtheorem*{remark}{Remark}

\def\epsilon{\varepsilon}
\def\phi{\varphi}
\def\<{\langle}
\def\>{\rangle}

\def\N{\mathbb{N}}

\def\R{\mathbb{R}}

\def\P{\mathcal{P}}
\def\D{\mathcal{D}}

\def\W{\mathrm{Wr}}
\def\M{\mathcal{M}}
\def\K{\mathcal{K}}
\def\Kn{\mathscr{K}}
\def\ev{\mathrm{ev}}
\def\uu{\mathcal{W}}

\title{Vassiliev invariants and writhe for periodic orbits of Axiom A flows}
\author{Solly Coles}\thanks{\textit{Department of Mathematics, Northwestern University}}\thanks{solly.coles@northwestern.edu}

\begin{document}
\begin{abstract}
    We obtain asymptotics for the average value taken by a Vassiliev invariant on knots appearing as periodic orbits of an Axiom A flow on $S^3.$ The methods used also give asymptotics for the writhe of periodic orbits. Our results are analogous to those of G. Contreras for average linking numbers.
\end{abstract}
\maketitle
\section{Introduction}
Axiom A flows are a class of chaotic dynamical system introduced by Smale in \cite{smale}. On a compact Riemannian manifold, the Axiom A flows form an open set in the $C^1$ topology. Furthermore, these systems are structurally stable, meaning that a small $C^1$ perturbation of an Axiom A flow yields a conjugate flow. Smale showed that for Axiom A flows, the non-wandering set is comprised of finitely many disjoint invariant attractors called basic sets. The dynamics of an Axiom A flow on one of its basic sets has been widely studied, and in particular the distribution of of periodic orbits is well understood. 

In \cite{contreras}, Contreras considers an Axiom A flow $X^t:S^3\to S^3$ restricted to a basic set $\Lambda$. It is shown that the average linking number of two periodic orbits whose periods are approximately $S$ and $T$ grows proportionally to $ST$. Precisely, let $$\P_T=\{\gamma\hbox{ a periodic orbit : period}(\gamma)\in (T-1,T]\},$$ and let $I:\Lambda\times \Lambda:\to \R$ be defined by $$I(x,y)=\frac{1}{4\pi}\frac{x-y}{\|x-y\|^3}\cdot (X(x)\times X(y)),$$ where $X$ is the vector field generating $X^t.$ Note that $I$ is undefined at the diagonal in $\Lambda\times \Lambda.$ One case of Theorem B in \cite{contreras} says that if $X^t$ is weak-mixing, then
\begin{equation}\label{contrerasthm}
\lim_{S,T\to\infty}\frac{\sum_{\gamma\in \P_S,\eta\in \P_T}\mathrm{lk}(\gamma,\eta)}{ST\#\P_S\#\P_T}=\int I\,d(\mu\times\mu),
\end{equation}
where $\mu$ is the measure of maximal entropy of $X^t$ on $\Lambda$, and $\mathrm{lk}$ is the linking number.
The scheme of proof is as follows. Integrating $I$ along $\gamma$ and $\eta$ gives their linking number, via the classical Gauss linking integral. Thus the left hand side of (\ref{contrerasthm}) becomes a limit of integrals of $I$ against a product of measures supported on the orbits in $\P_S$ and $\P_T$. A result of Bowen \cite{Bowen}, is that these periodic orbit measures converge weak* to $\mu$ (this is usually referred to as equidistribution). The remainder of the proof consists of showing that $I$ is sufficiently well-behaved near the diagonal for the integrals to converge. 

In this paper, we will prove analogous results where instead of the linking number, we consider Vassiliev invariants and writhe of periodic orbits. To state our main result precisely, we give a brief description of these quantities; the formal definitions will be given in Section \ref{vassilievsec}.

The Vassiliev (or finite type) knot invariants were introduced by Vassiliev in \cite{vassiliev}, and have since been shown to be a powerful class of invariants. The coefficients of all classical knot polynomials are Vassiliev invariants (up to coordinate changes). This is also true of the quantum invariants of Reshetikhin and Turaev (see \cite{Bar-Natan} or \cite{CDM}, for example). The writhe of a knot $K$, whilst not an invariant, measures the average amount of self crossing across all planar projections of $K$. This is in some sense a measure of the tangledness of $K$, and is used in the study of elastic rods, in particular DNA topology, to quantify deformation and coiling due to torsional stress (see \cite{fuller}, for example).

All Vassiliev invariants can be evaluated using the configuration space integrals of Bott and Taubes (see \cite{Bott-Taubes}, \cite{volic}). We give a brief outline here, with details in Section \ref{confspaceint}. 

Let $\Kn$ denote the space of knots (smooth embeddings $S^1\to \R^3$). For a manifold $M$, let $C(k,M)$ denote the $k$-fold configuration space of $M$. Then, for each Vassiliev invariant $V:\Kn\to \R$, there exist $m\in \N$ and functions $f_k\colon C(k,S^1)\times \Kn\to \R$ for $k=2,\ldots, m$   such that 
\begin{equation}\label{vassexpression}
V(K)=\sum_{k=2}^m\int_{z\in C(k,S^1)}f_k(z,K)\,dz.
\end{equation}
Each function $f_k$ corresponds to a collection of trivalent diagrams, which are trivalent graphs consisting of $k\geq 2$ vertices lying on a circle, and $s\geq 0$ vertices inside the circle. Given such a diagram $D$, one considers the Gauss maps 
\begin{align*}
    h_{ij}\colon C(k+s,\R^3)&\to S^2\\
    (x_1,\ldots, x_{k+s})&\mapsto \frac{x_j-x_i}{\|x_j-x_i\|},
\end{align*}
where $i<j$ label a pair of adjacent vertices in $D$. The core of the construction involves taking a product of pullbacks of the standard volume form $\omega$ on $S^2$, under the maps $h_{ij}$. One must then perform an appropriate fibre integration and evaluate the resulting form on tangent vectors to $K\in \Kn$. This process gives a function $f_D\colon C(k,S^1)\times \Kn\to \R$, and $f_k$ in (\ref{vassexpression}) is a weighted sum $\sum_{k(D)=k}W(D)f_D$. Strictly speaking, to ensure pullbacks are nontrivial and extend smoothly to the boundary, one should instead carry out this procedure in the Fulton-MacPherson compactification of configuration space.

For the simplest trivalent diagram $D_0$, with $k=2$ and $s=0,$ the function $f_{D_0}$ is given by $$f_{D_0}(t_1,t_2,K)=\frac{K(t_2)-K(t_1)}{\|K(t_2)-K(t_1)\|^3}\cdot (K'(t_1)\times K'(t_2)),$$ which resembles the function in the Gauss linking integral and that considered by Contreras. Furthermore, the integral of $\frac{1}{4\pi}f_{D_0}(\cdot,\cdot,K)$ defines the writhe of $K$. In \cite{KV}, Komendarczyk and Voli\'c analyse the functions $f_D$ along knots given by closing up flow trajectories using short geodesics. They show that the behaviour of $f_D$ near the boundary of configuration space is similar to that of $f_{D_0}.$ The purpose of this paper is bring together their analysis with the appropriate equidistribution theory to obtain the following.

\begin{theorem}\label{mainthmintro}
    Let $X^t$ be a weak-mixing Axiom A flow on $S^3$, restricted to a basic set $\Lambda$. Let $V:\Kn\to \R$ be a Vassiliev invariant and take $m,f_k$ as in $(\ref{vassexpression})$. Then  
    $$\lim_{T\to \infty}\frac{\sum_{\gamma\in\P_T}V(\gamma)}{T^{m}\#\P_T}=\int\sum_{k(D)=m}W(D) f_D\,d(\underbrace{\mu\times \cdots \times \mu}_{m \hbox{ times}}),\hbox{ and }$$
  $$\lim_{T\to \infty}\frac{\sum_{\gamma\in \P_T}\W(\gamma)}{T^2\#\P_T}=\frac{1}{4\pi}\int f_{D_0}\,d(\mu\times\mu),$$
where $\mu$ is the measure of maximal entropy for $X^t$ on $\Lambda.$
\end{theorem}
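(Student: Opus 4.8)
The plan is to reduce the statement, via the Bott--Taubes expression (\ref{vassexpression}) together with a reparametrisation, to a product analogue of Bowen's equidistribution of periodic orbits \cite{Bowen}, with the estimates of Komendarczyk--Voli\'c \cite{KV} controlling the integrands near the diagonals. First I would reparametrise each periodic orbit $\gamma$ by the flow, $\R/\ell(\gamma)\Z\to\Lambda$, $t\mapsto X^t(x_\gamma)$: since each $\int_{C(k,S^1)}f_D(z,K)\,dz$ is the integral of a differential $k$-form on $C(k,S^1)$ (extended over the Fulton--MacPherson compactification), it is insensitive to the parametrisation, and in the flow parametrisation $\gamma'=X\circ\gamma$. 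As $f_D(t_1,\dots,t_k,K)$ depends on $K$ only through the points $K(t_i)$ and the velocities $K'(t_i)$, it follows that $f_D(t_1,\dots,t_k,\gamma)=f_D(\gamma(t_1),\dots,\gamma(t_k))$ for a function --- I keep the name $f_D$ --- on the configuration space $C(k,\Lambda)$ that is \emph{independent of $\gamma$} and which, by \cite{KV}, behaves near the diagonal strata like the Gauss kernel $f_{D_0}$, equivalently like the kernel $I$ of \cite{contreras}. Writing $\mu_\gamma$ for the periodic-orbit measure and noting that coinciding parameters form a null set, this gives
\begin{equation*}
\int_{C(k,S^1)}f_D(z,\gamma)\,dz=\ell(\gamma)^k\int_{\Lambda^k}f_D\,d\mu_\gamma^{\times k},
\end{equation*}
whence $\W(\gamma)=\tfrac{1}{4\pi}\ell(\gamma)^2\int_{\Lambda^2}f_{D_0}\,d(\mu_\gamma\times\mu_\gamma)$ and $V(\gamma)=\sum_{k=2}^m\ell(\gamma)^k\int_{\Lambda^k}g_k\,d\mu_\gamma^{\times k}$, with $g_k:=\sum_{k(D)=k}W(D)f_D$.

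Next I would isolate the leading order. For $\gamma\in\P_T$ one has $\ell(\gamma)\in(T-1,T]$, so $\ell(\gamma)^k/T^m\to0$ uniformly in $\gamma$ when $k<m$, while $\ell(\gamma)^m/T^m\to1$ uniformly. By \cite{KV} the $f_D$ have the near-diagonal behaviour of $I$, so each $\int_{\Lambda^k}f_D\,d\mu_\gamma^{\times k}$ is finite and bounded uniformly over $\gamma\in\P_T$; hence, after dividing by $T^m$ and averaging over $\P_T$, every term with $k<m$ tends to $0$. The theorem thus reduces to the assertion
\begin{equation*}
\lim_{T\to\infty}\frac{1}{\#\P_T}\sum_{\gamma\in\P_T}\int_{\Lambda^k}g\,d\mu_\gamma^{\times k}=\int_{\Lambda^k}g\,d\mu^{\times k},
\end{equation*}
applied with $(k,g)=(m,g_m)$ for the Vassiliev limit and with $(k,g)=(2,\tfrac{1}{4\pi}f_{D_0})$ for the writhe.

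To prove this assertion I would follow \cite{contreras}. The right-hand side is finite: weak-mixing forces $\Lambda$ to consist of more than a single periodic orbit, hence to carry positive topological entropy, so $\dim\mu>1$ and $I$ --- and therefore, by \cite{KV}, each $f_D$ --- lies in $L^1(\mu^{\times k})$. For the limit, split $\Lambda^k$ into a small neighbourhood of $\bigcup_{i<j}\{x_i=x_j\}$ and its complement. On the complement $g$ is bounded and continuous, and the contribution there converges once one knows the weak$^*$ convergence of $k$-fold products $\frac1{\#\P_T}\sum_{\gamma\in\P_T}\mu_\gamma^{\times k}\to\mu^{\times k}$, via the generalised Portmanteau theorem (the diagonals being $\mu^{\times k}$-null). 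The near-diagonal contribution is made uniformly small in $\gamma\in\P_T$ and in $T$ by the argument of \cite{contreras}: \cite{KV} bounds $g$ there by the Gauss kernel, and the local product structure of the basic set controls the $\mu$- and $\mu_\gamma$-mass near a diagonal. The last ingredient, the product equidistribution, is the natural extension of \cite{Bowen}: realising $\Lambda$ as a suspension over a mixing subshift of finite type makes $\mu$ a Gibbs state and periodic orbits periodic words, and the convergence follows from periodic-orbit counting together with the decay of correlations that weak-mixing supplies.

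I expect the main obstacle to be exactly this last step. The measure $\frac1{\#\P_T}\sum_{\gamma}\mu_\gamma^{\times k}$ is a genuine self-product, not the $k$-th power of $\frac1{\#\P_T}\sum_\gamma\mu_\gamma$, so its convergence to $\mu^{\times k}$ does not follow formally from Bowen's single-orbit equidistribution: one must show that the $k$ time-parameters along one orbit decorrelate after averaging over $\P_T$, even though an individual long orbit need not equidistribute. Contreras meets the analogous difficulty only for two \emph{distinct} orbits and a single Gauss kernel, where the product structure is automatic; here one must combine a self-product equidistribution with the integration, against these measures, of the more elaborate but --- by \cite{KV} --- comparably singular kernels $f_D$.
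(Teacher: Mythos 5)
Your overall strategy coincides with the paper's: express $V$ and $\W$ through the Bott--Taubes integrals, reparametrise each orbit by the flow so that $I_D(\gamma)=\ell(\gamma)^k\int f_{D,X}\,d\mu_\gamma^k$ with $f_{D,X}$ independent of $\gamma$, discard the terms with $k<m$, and reduce everything to (a) the convergence $\frac{1}{\#\P_T}\sum_{\gamma\in\P_T}\mu_\gamma^{\times k}\to\mu^{\times k}$ and (b) a uniform near-diagonal estimate coming from Komendarczyk--Voli\'c. Your treatment of (b) is in the same spirit as the paper's, which uses the Key Lemma of \cite{KV} to get $\int_{B_R}|f_{D,X}|\,d\mu_\gamma^k\leq MR/\ell(\gamma)^{k-1}$, and your reduction of the Vassiliev case to the top-degree diagrams is the same as the paper's passage from Theorem \ref{mainthm1} to Theorem \ref{aveintvalsthm}.

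The step you yourself flag as the main obstacle --- the self-product equidistribution in (a) --- is, however, a genuine gap in the proposal, and the tools you gesture at (periodic-orbit counting plus the decay of correlations supplied by weak-mixing) are not the ones that close it. Decay of correlations is a property of the invariant measure $\mu$; it gives no direct control over the multiple time-correlations of a test function along a \emph{single} periodic orbit, which is exactly what $\int\psi\,d\mu_\gamma^{\times k}$ measures, and as you note an individual long orbit need not equidistribute at all. The paper's resolution is Kifer's large deviation theorem for hyperbolic flows (Theorem \ref{largedev}, \cite{kifer}): for any compact $\K\subset\M(X)$ with $\mu\notin\K$, the proportion of $\gamma\in\P_T$ with $\mu_\gamma\in\K$ decays exponentially in $T$. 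Taking $\K=\{m\in\M(X):|\int\psi\,dm^k-\int\psi\,d\mu^k|\geq\epsilon\}$ shows that for all but an exponentially small fraction of orbits the quantity $\int\psi\,d\mu_\gamma^{\times k}$ is within $\epsilon$ of $\int\psi\,d\mu^{\times k}$, which yields the weak* convergence (Theorem \ref{measconv}). In other words, the decorrelation of the $k$ time-parameters is obtained not from mixing of the flow but from the fact that a \emph{typical individual} orbit in $\P_T$ nearly equidistributes in the quantitative sense of large deviations --- a statement strictly stronger than Bowen's averaged equidistribution \cite{Bowen} and the one missing ingredient in your argument. Without it, or an equivalent thermodynamic-formalism substitute, your reduction does not yield the theorem; with it, the rest of your outline goes through essentially as in the paper.
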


The outline of the paper is as follows. In Section 2, we define Axiom A flows on $S^3$. In Section 3, we describe the configuration space integrals of Bott and Taubes, and in Section 4 we show how Vassiliev invariants and writhe can be evaluated using these integrals. In Section 5 we find asymptotics for the average value taken by a configuration space integral over the periodic orbits of our flow, which proves Theorem \ref{mainthmintro}.

\subsection*{Acknowledgements} I am grateful to  Bryna Kra and Richard Sharp for many useful discussions and their comments about this paper. I also thank Rafal Komendarczyk for helpful insight on the paper \cite{KV}.

\section{Axiom A flows}
Let $X$ be a non-stationary $C^1$ vector field on $S^3$ generating a flow $X^t\colon S^3\to S^3.$ Given $\Lambda\subset S^3$, let $T_\Lambda S^3$ denote the restriction of the tangent bundle $TS^3$ to points in $\Lambda$, i.e. $T_\Lambda$ is the disjoint union, over $x\in \Lambda$, of the tangent spaces $T_xS^3$. 

A closed invariant set $\Lambda$ is \textit{hyperbolic} if there is a continuous $DX^t$-invariant splitting $T_\Lambda S^3=E^u\oplus E\oplus E^s$ and constants $C,\lambda>0$ satisfying:
\begin{enumerate}
    \item $\|DX^t(v)\|\leq Ce^{-\lambda t}\|v\|$ for all $v\in E^u$, and $t\leq 0.$
    \item $\|DX^t(v)\|\leq Ce^{-\lambda t}\|v\|$ for all $v\in E^s$, and $t\geq 0.$
    \item $E$ is the subbundle generated by $X.$
\end{enumerate}
A hyperbolic set $\Lambda$ is called \textit{basic} if:
\begin{enumerate}
    \item $X|_\Lambda$ has an orbit which is dense in $\Lambda$.
    \item Periodic points of $X|_\Lambda$ are dense in $\Lambda$.
    \item There is an open set $U\subset M$ with $\Lambda=\bigcap_{t\in \R}X^t(U).$
\end{enumerate}
The flow $X^t$ is \textit{Axiom A} if its non-wandering set $\Omega\subset S^3$ is a disjoint union of  finitely many basic sets. In this paper, we will always consider $X^t$ to be restricted to one of these basic sets $\Lambda$. 

A classification of non-singular Axiom A flows on $S^3$ was given by Franks in \cite{franks} (this was extended to the singular case by de Rezende \cite{derezende}). We will not discuss this classification, but we note that these results imply there are many such flows: in particular, the suspension of any mixing subshift of finite type can be realised as an Axiom A flow on a basic set in $S^3.$

Note that we cannot have $\Lambda=S^3$, as then $X^t$ would be an Anosov flow, and $S^3$ carries no such flows (J. Plante and W. Thurston \cite{plante}). Thus $\Lambda\subsetneq S^3$ can be viewed as a compact subset of $\R^3.$ We will use this viewpoint when considering knot invariants for periodic orbits.

We will also assume that our flow is weak-mixing, meaning that if $f\colon \Lambda\to S^1$ is continuous, and there is $a\in [0,2\pi)$ such that $f\circ X^t(x)=e^{iat}f(x)$ for all $x$, then $a=0$ and $f$ is constant.

\section{Configuration space integrals}
We put aside the dynamics for now, and discuss Bott-Taubes integration, which will be used to evaluate Vassiliev invariants and writhe. A detailed survey of this method is given in \cite{volic}, though our notation more closely resembles the exposition in \cite{KV}. We will define a class of integrals, each corresponding to a \textit{trivalent diagram}.
\subsection{Trivalent diagrams}
A trivalent diagram $D$ is a connected graph consisting of $k=k(D)$ vertices lying on the same circle, and $s=s(D)$ inside of this circle (called free vertices). These are connected by edges in such a way that free vertices have valence 3, and circle vertices have valence 1. Note that the circle's edge is not considered as part of the edge set $E(D)$, so we have $\#E(D)=\frac{k+3s}{2}.$ If the circle edge were included, circle vertices would also have valence 3, hence the name trivalent diagram. The vertex set $V(D)$ has cardinality $k+s$, which must be even. Say $k+s=2n$ and label vertices with the set $\{1,\ldots, 2n\}$. Edges will be denoted by $(i,j),$ where $i,j\in \{1,\ldots,2n\}$ and $i<j$. The integer $n$ is called the degree of $D.$ Examples of these diagrams can be seen in Figure \ref{fig:3}.
\begin{figure}[ht]
\centering
\begin{tikzpicture}[scale=0.4]
\draw[color=blue] (0,0) circle(3);
    \draw[fill=black] (-3,0) circle(3pt);
    \draw[fill=black] (3,0) circle(3pt);
    \draw (-3,0)--(3,0);
    \draw (0,-4) node {$k=2,$ $s=0$};
\draw[color=blue] (8,0) circle(3);
    \draw[fill=black] (10.225,2) circle(3pt);
    \draw[fill=black] (5.775,2) circle(3pt);
    \draw[fill=black] (8,-3) circle(3pt);
    \draw[fill=black] (8,0) circle(3pt);
    \draw (8,-3) -- (8,0) -- (10.225,2);
    \draw (8,0) -- (5.775,2);
    \draw (8,-4) node {$k=3$, $s=1$};
\draw[color=blue] (16,0) circle(3);
    \draw[fill=black] (18.225,2) circle(3pt);
    \draw[fill=black] (13.775,2) circle(3pt);
    \draw[fill=black] (18.225,-2) circle(3pt);
    \draw[fill=black] (13.775,-2) circle(3pt);
    \draw[fill=black] (17,0) circle(3pt);
    \draw[fill=black] (15,0) circle(3pt);
    \draw (18.225,2) -- (17,0) -- (15,0) -- (13.775,2)
    (15,0) -- (13.775,-2)
    (17,0) -- (18.225,-2);
    \draw (16,-4) node {$k=4$, $s=2$};
\draw[color=blue] (24,0) circle(3);
    \draw[fill=black] (26.225,2) circle(3pt);
    \draw[fill=black] (21.775,2) circle(3pt);
    \draw[fill=black] (24,-3) circle(3pt);
    \draw[fill=black] (24,0) circle(3pt);
    \draw[fill=black] (26.225,-2) circle(3pt);
    \draw[fill=black] (21.775,-2) circle(3pt);
    \draw (24,-3) -- (24,0) -- (26.225,2);
    \draw (24,0) -- (21.775,2);
    \draw (26.225,-2) -- (21.775,-2);
    \draw (24,-4) node {$k=5$, $s=1$};
\end{tikzpicture}
\caption{Trivalent diagrams}
\label{fig:3}
\end{figure}
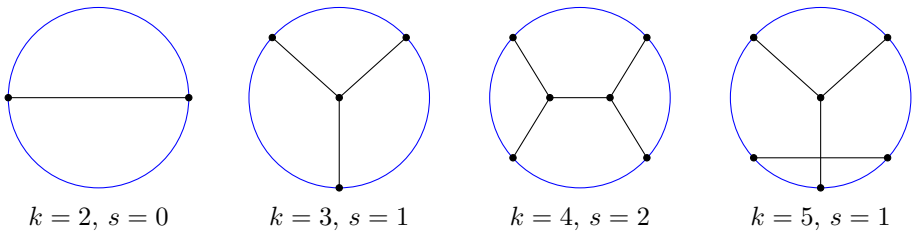

We denote by $TD_n$ the set of trivalent diagrams of degree $n$ up to orientation preserving diffeomorphism of the outer circle. Let $\D_n$ be the vector space over $\R$ generated by $TD_n$, modulo the relation (called the STU relation) in Figure \ref{fig:4}.
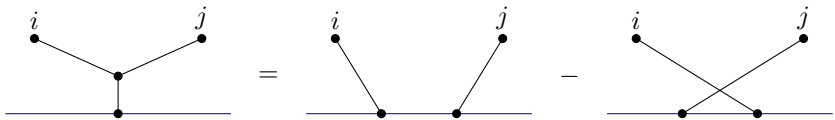
\begin{figure}[ht]
\centering
\begin{tikzpicture}[scale=0.5]
\draw[color=blue] (-3,0)--(3,0);
    \draw[fill=black] (-2.225,2) circle(3pt);
    \draw (-2.225,2) node[above] {$i$};
    \draw (2.225,2) node[above] {$j$};
    \draw[fill=black] (0,1) circle(3pt);
    \draw[fill=black] (0,0) circle(3pt);
    \draw[fill=black] (2.225,2) circle(3pt);
    \draw (-2.225,2) -- (0,1) -- (0,0)
    (0,1) -- (2.225,2);
\draw (4,1) node {$=$};
\draw[color=blue] (11,0) -- (5,0);
\draw (12,1) node {$-$};
    \draw[fill=black] (9,0) circle(3pt);
    \draw[fill=black] (7,0) circle(3pt);
    \draw[fill=black] (10.225,2) circle(3pt);
    \draw[fill=black] (5.775,2) circle(3pt);
    \draw (10.225,2) node[above] {$j$};
    \draw (5.775,2)  node[above] {$i$};
    \draw (10.225,2) -- (9,0) (7,0)-- (5.775,2);
\draw[color=blue] (13,0) -- (19,0);
    \draw[fill=black] (17,0) circle(3pt);
    \draw[fill=black] (15,0) circle(3pt);
    \draw[fill=black] (18.225,2) circle(3pt);
    \draw[fill=black] (13.775,2) circle(3pt);
    \draw (13.775,2) node[above] {$i$};
    \draw (18.225,2) node[above] {$j$};
    \draw (13.775,2) -- (17,0) (15,0) -- (18.225,2);
\end{tikzpicture}
\caption{STU relation: baseline is a segment of the circle.}
\label{fig:4}
\end{figure}
Consider the dual $\mathcal{W}_n$ of $\D_n$. We call $W\in \uu_n$ a \textit{weight system}, which is \textit{primitive} if it vanishes on diagrams which can be obtained as the connected sum of two smaller diagrams.

The integral corresponding to a diagram $D$ will be defined over a compactification of configuration space, defined by Fulton and MacPherson \cite{fultonmacpherson}.

\subsection{Fulton-MacPherson compactification}

Let $M$ be a Riemannian manifold and $C(k,M)$ denote the $k$-fold configuration space $$C(k,M)=\{(x_1,\ldots, x_k)\in M^k\hbox{ : }x_i\neq x_j\hbox{ whenever }i\neq j\}.$$
For each $S\subset \{1,\ldots, k\}$ with $|S|\geq 2,$ let $\Delta_S$ denote the $S$-diagonal in $M^k$ i.e. $$\Delta_S=\{(x_1,\ldots, x_k)\in M^k\hbox{ : }x_i=x_j\hbox{ for all }i,j\in S\}.$$
Let $\mathrm{Bl}(M^k,\Delta_S)$ be the blowup of $M^k$ along $\Delta_S$ i.e. the replacement of $\Delta_S$ with its unit normal bundle. Blowups in this context are discussed in more detail in \cite{KV}. For each $S$ there is a natural inclusion map $\beta\colon M^k\setminus \Delta_S\hookrightarrow \mathrm{Bl}(M^k,\Delta_S)$. Thus we can define a further inclusion $$\alpha^k_M\colon C(k,M)\hookrightarrow M^k\times \prod_{|S|\geq 2}\mathrm{Bl}(M^k,\Delta_S),$$ as the product of the standard inclusion and each of the blowup maps. The Fulton-MacPherson compactification of $C(k,M)$ is then defined by $$C[k,M]=\overline{\alpha^k_M(C(k,M))}.$$

We now define a pullback bundle over which we will later integrate. This requires two maps between appropriate compactifications. First, consider the evaluation map 
\begin{align*}
\mathrm{ev}\colon C(k,S^1)\times \Kn &\to C(k,\R^3) \\ 
(t_1,\ldots, t_k,K)&\mapsto (K(t_1),\ldots, K(t_k)),
\end{align*}
and let $\widetilde{\ev}$ be the lift of $\ev$ to $C[k,S^1]\times \Kn.$ Next, fix $s\geq 0$ and let $$\pi_k\colon C(k+s,\R^3)\to C(k,\R^3)$$ be the projection to the first $k$ entries. Denote by $\widetilde{\pi}_k$ the lift of $\pi_k$ to $C[k+s,\R^3].$ Define $C[k,s,\R^3,\Kn]$ as the pullback bundle given by $\widetilde{\ev}$ and $\widetilde{\pi}_k,$ as in Figure \ref{fig:5}. 
\begin{figure}[ht]
\centering
\begin{tikzpicture}[scale=0.5]
\draw (0,0) node[left] {$C[k,S^1]\times \Kn$};
\draw[->] (0,0) -- (5,0) node[midway, above] {$\widetilde{ev}$};
\draw (5,0) node[right] {$C[k,\R^3]$};
\draw[->] (-2.25,3) -- (-2.25,0.6);
\draw (-2.25,3) node[above]{$C[k,s,\R^3,\Kn]$};
\draw[->] (6.5,3) -- (6.5,0.6) node[midway, right]{$\widetilde{\pi}_k$};
\draw (6.5,3.5) node {$C[k+s,\R^3]$};
\draw[->] (0,3.5) -- (4.5,3.5);
\end{tikzpicture}
\caption{Defining $C[k,s,\R^3,\Kn]$.}
\label{fig:5}
\end{figure}

Letting  $p\colon C[k,S^1]\times \Kn\to \Kn$ be the projection onto the second component,  $C[k,s,\R^3,\Kn]$ can be understood as follows. 

For a fixed $K\in \Kn$, the interior of the fibre $(\widetilde{\pi}_k\circ p)^{-1}(K)$ in $C[k,s,\R^3,\Kn]$ is given by the points $$\{(x_1,\ldots, x_{k+s})\in C(k+s,\R^3)\hbox{ : }x_i\hbox{ lies on }K\hbox{ for }1\leq i\leq  k\}.$$
Such points are then extended to the boundary via the map $\alpha^{k+s}_{\R^3}$ defined above. 

\subsection{Configuration space integrals}\label{confspaceint}

Here we define the configuration space integral associated to a trivalent diagram $D.$ 

Let $k=k(D)$ and $s=s(D).$ For $(i,j)\in E(D)$, let $h_{ij}\colon C(k+s,\R^3)\to S^2$ denote the Gauss map $$h_{ij}(x_1,\ldots,x_{k+s})=\frac{x_j-x_i}{\|x_j-x_i\|},$$
and let $\widetilde{h}_{ij}$ be the lift of $h_{ij}$ to $C[k,s,\R^3,\Kn].$ Then, define $$h_D=\Bigg(\prod_{(i,j)\in E(D)}\widetilde{h}_{ij}\Bigg)\colon C[k,s,\R^3,\Kn]\to \prod_{(i,j)\in E(D)}S^2.$$ Pulling back the standard volume form $\omega$ on $S^2$, we obtain $$\omega_D=h_D^*(\omega\times\cdots \times \omega).$$
Since $\#E(D)=\frac{k+3s}{2}$, $\omega_D$ is a $(k+3s)$-form on $C[k,s,\R^3,\Kn].$ 

\begin{definition}
    Given $K\in \Kn$, define $I_D(K)$ to be the integral of $\omega_D$ over the $(k+3s)$-dimensional fibre of $K,$ i.e. $$I_D(K)=\int_{(\widetilde{\pi}_k\circ p)^{-1}(K)}\omega_D.$$
\end{definition} Let us give a more practical expression for $I_D(K).$ Note that the function $h_D$ above factors through a map $$\overline{h}_D\colon C[k+s,\R^3]\to \prod_{(i,j)\in E(D)}S^2,$$ 
so we may consider the $(k+3s)$-form $\beta=(\overline{h}_D)^*(\omega\times\cdots\times \omega)$ on $C[k+s,\R^3]$. Taking the fibre integral of $\beta$ with respect to $\widetilde{\pi}_k$ gives a $k$-form on $C[k,\R^3]$ (since the fibres $\widetilde{\pi}_k^{-1}(x)$ are $3s$-dimensional), we call this $k$-form $\varpi_D$. Let $\alpha=\alpha_{\R^3}^k$ be the map in the definition of the compactification, so that $\alpha^*\varpi_D$ is a $k$-form on $C(k,\R^3).$ 

For each $K\in \Kn,$ define a function 
\begin{align*}
    f_{D,K}\colon C(k,S^1)&\to \R\\
    (t_1,\ldots, t_k)&\mapsto (\alpha^*\varpi_D)_{(K(t_1),\ldots, K(t_k))}(K'(t_1),\ldots, K'(t_k)),
\end{align*}
where we have abused notation and used $K'(t_i)$ to denote the $3k$-vector with $K'(t_i)$ in entries $3i-3, 3i-2,3i-1$, and zeroes everywhere else. Proposition 3.7 in \cite{KV} says that $$I_D(K)=\int_{C(k,S^1)}f_{D,K}\,dt_1\ldots dt_k.$$

\section{Knot quantities as integrals}
Here we will describe the Vassiliev invariants and writhe, and show how they can be computed using the configuration space integrals above. In doing so, we will show that to prove Theorem \ref{mainthmintro} it suffices to find asymptotics for each $I_D$ on periodic orbits.
\subsection{Writhe}\label{writhe}
In this paper, a knot is a smooth embedding $K:S^1\to \R^3.$ We will abuse notation and also use $K$ to denote the image of such an embedding, paired with an orientation given by the clockwise orientation of $S^1.$

We begin by defining the writhe of $K\in\Kn$ as an average self-crossing number seen amongst the planar projections of $K$. Let $$G_K=\{v\in S^2\hbox{ : }v\hbox{ is not parallel to any tangent vector of }K\}.$$ Note that $G_K$ is open and dense, and has full volume in $S^2.$  For $v\in G_K$, consider the \textit{knot diagram} of $K$ on the plane normal to $v.$ This is a drawing of $K$ given by projecting $K$ onto the plane normal to $v,$ and replacing self intersections with \textit{crossings}, according to the order of points of $K$ along the directed line $\{tv\}_{t\in \R}$ in $\R^3$ (see Figure \ref{fig:6}).
\begin{figure}[ht]
\includegraphics[scale=0.35]{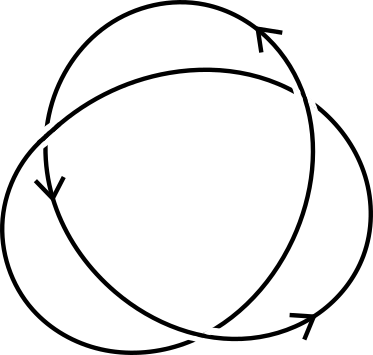}
\caption{A diagram for the trefoil knot}
\label{fig:6}
\end{figure}

The \textit{directional writhing number} $d_K(v)$ of a diagram is given by assigning each crossing either $+1$ or $-1$ depending on orientation (see Figure \ref{fig:1}), and taking the sum over all crossings. If $v\notin G_K$, we say $d_K(v)=0.$ For example, the diagram in Figure \ref{fig:6} has directional writhing number 3.

\begin{figure}[ht]
\centering
\begin{tikzpicture}[scale=0.5]
\draw[-] (-3,-2)--(-3,-0.25);
\draw[->] (-3,0.25)--(-3,2);
\draw[->] (-5,0)--(-1,0);
\draw[->] (3,-2)--(3,2);
\draw[-] (1,0)--(2.75,0);
\draw[->] (3.25,0)--(5,0);
\path (-3,-3) node {$+1$}
(3,-3) node {$-1$};
\end{tikzpicture}
\caption{Computing the directional writhing number}
\label{fig:1}
\end{figure}
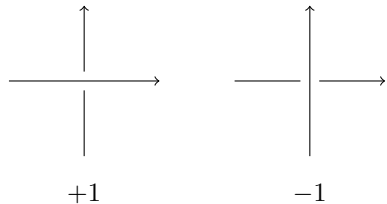
One can show that $d_K(v)$ is locally constant on $G_K$, and integrable over $S^2.$ 
\begin{definition}
The writhe of $K,$ written $\W(K)$, is defined by $$\W(K)=\int_{S^2}d_K(v)\, dv.$$
\end{definition}
Though this definition gives an intuitive notion of writhe, we will use another definition which gives a more direct formula, via the configuration space integral $I_{D_0}.$ 

\begin{theorem}\label{betterdef} If $K$ is differentiable, then \begin{align*}
4\pi\W(K)&=\int_{S^1}\int_{S^1}\frac{K(t_2)-K(t_1)}{\|K(t_2)-K(t_1)\|^3}\cdot (K'(t_2)\times K'(t_1))\,dt_1dt_2,\\
&=\int_{C(2,S^1)}f_{D_0,K}\,dt_1 dt_2=I_{D_0}(K).
\end{align*}
\end{theorem}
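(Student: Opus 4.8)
The plan is to identify $I_{D_0}(K)$ with the signed area that the secant map of $K$ sweeps on the sphere, and then to evaluate that area one direction at a time, matching the fibre over a generic $v\in S^2$ with the crossings that build up the directional writhing number $d_K(v)$.

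First I would establish the first equality by unwinding the definition of $I_{D_0}$. Since $D_0$ has $k=2$ and $s=0$ there is no fibre integration, so $\varpi_{D_0}=\overline{h}_{D_0}^{\,*}\omega=h_{12}^{*}\omega$, and by the formula for $I_D$ recorded above (Proposition~3.7 of \cite{KV}) we have $I_{D_0}(K)=\int_{C(2,S^1)}f_{D_0,K}\,dt_1dt_2$ with $f_{D_0,K}(t_1,t_2)=(h_{12}^{*}\omega)_{(K(t_1),K(t_2))}(K'(t_1),K'(t_2))$. Using $\omega_u(a,b)=u\cdot(a\times b)$ for $u\in S^2$, and observing that the components of $dh_{12}$ along the radial direction $u$ do not contribute to $\omega_u$, a one-line differentiation recovers the stated Gauss-type integrand. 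Equivalently, $f_{D_0,K}\,dt_1\wedge dt_2=g^{*}\omega$, where $g\colon C(2,S^1)\to S^2$ is the secant map $g(t_1,t_2)=\big(K(t_2)-K(t_1)\big)/\|K(t_2)-K(t_1)\|$, so that $I_{D_0}(K)=\int_{C(2,S^1)}g^{*}\omega$, and it remains to show this equals $4\pi\W(K)$.

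Next I would convert $\int_{C(2,S^1)}g^{*}\omega$ into a signed count of fibres. A Taylor expansion of the integrand near the diagonal shows that $f_{D_0,K}$ extends continuously (indeed by $0$) across $\{t_1=t_2\}$, so the integral converges absolutely; in the language of the paper, $g$ lifts to a smooth map $\bar g\colon C[2,S^1]\to S^2$ on the Fulton--MacPherson compactification, the boundary $\partial C[2,S^1]$ maps under $\bar g$ into the measure-zero curve $\{\pm K'(t)/\|K'(t)\|\}\subset S^2$, and $I_{D_0}(K)=\int_{C[2,S^1]}\bar g^{*}\omega$. By Sard's theorem almost every $v\in S^2$ is then a regular value of $\bar g$ that lies in $G_K$ and for which the planar diagram of $K$ in the direction $v$ has only transverse double points; for such $v$ the preimage $\bar g^{-1}(v)$ is a finite subset of the interior $C(2,S^1)$, and the change of variables formula gives
\[
I_{D_0}(K)=\int_{S^2}\Big(\sum_{x\in \bar g^{-1}(v)}\operatorname{sign}\det(Dg)_x\Big)\,d\omega(v).
\]

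The core of the proof — and the step I expect to be the main obstacle — is the fibrewise identification. For a generic $v$, a point $(t_1,t_2)\in g^{-1}(v)$ is precisely a chord of $K$ in the direction $v$, i.e. a self-crossing of the diagram of $K$ along $v$ (with $K(t_2)$ the over- or under-strand according to the convention of Section~\ref{writhe}), and each such crossing comes from exactly one point of $g^{-1}(v)$, its transpose lying over $-v$. Substituting $K(t_2)-K(t_1)=\|K(t_2)-K(t_1)\|\,v$ into $f_{D_0,K}$ shows $\operatorname{sign}\det(Dg)_{(t_1,t_2)}=\pm\operatorname{sign}\det\big(v,K'(t_1),K'(t_2)\big)$, which one must then match with the crossing-sign rule of Figure~\ref{fig:1}. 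Making this match rigorous means fixing, consistently, the orientation of $C(2,S^1)$, the orientation of $S^2$ carrying $\omega$, and the crossing-sign convention, and verifying that the resulting constant is exactly $4\pi$ (the total area of $S^2$) and not $-4\pi$; this is the classical writhe computation of \cite{fuller}, whose bookkeeping I would follow. Once the signs are matched the fibre sum equals $d_K(v)$, whence $I_{D_0}(K)=\int_{S^2}d_K(v)\,d\omega(v)=4\pi\W(K)$ by the definition of writhe, completing the chain of equalities. It is worth noting that this is the elementary case precisely because $D_0$ has no free vertices: the integrand extends continuously across the diagonal, so none of the Fulton--MacPherson boundary contributions that complicate the higher-degree diagrams later in the paper arise here.
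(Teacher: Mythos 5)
Your argument is correct in outline, but it takes a genuinely different route from the paper: the paper does not prove Theorem \ref{betterdef} itself, instead citing \cite{AKT}, whose proof identifies $I_{D_0}(K)$ with the average linking number $\int_{v\in G_K}\mathrm{lk}(K,K+\epsilon v)\,dv$ of $K$ with its pushoffs and then uses the knot-diagram computation $\mathrm{lk}(K,K+\epsilon v)=d_K(v)$. You instead work directly with the secant map $g\colon C(2,S^1)\to S^2$: unwind $I_{D_0}$ to $\int g^*\omega$, extend to the Fulton--MacPherson compactification so that the boundary maps into the measure-zero tangent indicatrix, and apply the signed area formula to convert the integral into $\int_{S^2}\big(\sum_{x\in g^{-1}(v)}\operatorname{sign}\det(Dg)_x\big)\,d\omega(v)$, identifying the fibre sum over a generic $v$ with $d_K(v)$. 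This is Fuller's classical computation; it is self-contained and avoids introducing the pushoff $K+\epsilon v$, whereas the cited route leans on the already-established Gauss linking integral and absorbs the orientation bookkeeping into the known linking-number formula --- precisely the bookkeeping you correctly flag as the delicate step in your approach. Two small points to watch: first, showing the integrand extends continuously (by zero) across the diagonal requires a second-order Taylor expansion, so you are implicitly using that $K$ is $C^2$ rather than the literal ``differentiable'' hypothesis (harmless, since $\Kn$ consists of smooth embeddings); second, the signed count yields $I_{D_0}(K)=\int_{S^2}d_K(v)\,d\omega(v)$ with $\omega$ of total mass $4\pi$, so the stated factor $4\pi\W(K)$ only emerges if the $dv$ in the paper's definition of $\W$ is read as the normalized probability measure on $S^2$; you should pin down that normalization before declaring the constant to be $4\pi$ rather than $1$.
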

\begin{remark}
    A proof of this theorem can be found in \cite{AKT}, for example. They in fact show that the configuration space integral coincides with the average linking number $$\int_{v\in G_K}\mathrm{lk}(K,K+\epsilon v)\, dv,$$ where $\epsilon>0$ may depend on $v.$ One can see that $\mathrm{lk}(K,K+\epsilon v)=d_v(K)$ using the knot diagram computation of linking number.
\end{remark}

\subsection{Vassiliev invariants}\label{vassilievsec}
Here we will define Vassiliev (or finite type) invariants and show how they are evaluated with configuration space integrals.

An isotopy invariant $V:\Kn\to \R$ can be extended to knots with finitely many singularities (self intersections) by evaluating as in Figure \ref{fig:2} at each singularity, one-by-one. Extending $V$ to a knot with $n$ singularities involves evaluating $V$ at $2^n$ knots.
\begin{figure}[ht]
\centering
\begin{tikzpicture}[scale=0.5]
\draw[->] (-9.5,-2)--(-9.5,2);
\draw[->] (-11.5,0)--(-7.5,0);
\draw[-] (-3,-2)--(-3,-0.25);
\draw[->] (-3,0.25)--(-3,2);
\draw[->] (-5,0)--(-1,0);
\draw[->] (3.625,-2)--(3.625,2);
\draw[-] (1.625,0)--(3.375,0);
\draw[->] (3.875,0)--(5.625,0);
\draw[fill=red, color=red] (-9.5,0) circle(4pt);
\path (-6.25,0) node {$\Bigg)=V\Bigg($}
(0.25,0) node {$\Bigg)-V\Bigg($}
(6,0) node {$\Bigg)$}
(-12.125,0) node {$V\Bigg($};
\end{tikzpicture}
\caption{Extending $V$ to singularities}
\label{fig:2}
\end{figure}
\begin{definition}
    The invariant $V$ is a Vassiliev invariant of type $n$ if it vanishes on all knots with $n+1$ singularities.
\end{definition}

The following theorem, due to Altschuler and Freidel \cite{AF} (later reproved in the form below by D. Thurston \cite{Thurston}), says that all Vassiliev invariants can be expressed as a linear combination of configuration space integrals.

\begin{theorem}\label{vassinv}
Given a primitive weight system $W\in \mathcal{W}_n$, there are real numbers $\{m_D\}_{D\in TD_n}$ such that the map $$V_W(K)=\sum_{D\in TD_n}W(D)I_D(K)-m_D\W(K)$$ is a Vassiliev invariant of order $n.$ Furthermore, every Vassiliev invariant of order $n$ can be obtained in this way.
\end{theorem}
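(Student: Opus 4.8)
\emph{Approach.} I would follow the Bott--Taubes style of argument used by D. Thurston \cite{Thurston}: prove that the stated combination is an isotopy invariant by a Stokes argument on the compactified configuration space bundle, pin down its Vassiliev order and symbol by evaluating on singular knots, and finally deduce surjectivity by downward induction on the order. \emph{Stage 1 (well-definedness).} The first task is to check that each $I_D(K)$ is a finite smooth function of $K$, and this is exactly where the Fulton--MacPherson compactification is needed. The Gauss maps $h_{ij}$ extend smoothly to $\overline{h}_D$ on the compact manifold-with-corners $C[k+s,\R^3]$, so the pulled-back product of volume forms is smooth there; fiber integration along the compact fibers of $\widetilde{\pi}_k$, pullback along $\widetilde{\ev}$, evaluation on the tangent vectors $K'(t_i)$, and integration over the compact $C[k,S^1]$ all preserve smoothness and yield finite answers. (This is essentially Proposition 3.7 of \cite{KV}.)

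\emph{Stage 2 (isotopy invariance -- the heart of the proof).} Given a smooth isotopy $K_u$, I would compute $\frac{d}{du}\bigl(\sum_{D}W(D)I_D(K_u) - \bigl(\sum_D m_D\bigr)\W(K_u)\bigr)$ by differentiating under the integral and applying the generalized Stokes theorem to the configuration space bundle over $\Kn$. The result is a sum of integrals of $\omega_D$ over the codimension-one boundary faces, indexed by subsets of the $k+s$ points colliding together (and by points escaping to infinity). I would then show that every face except the \emph{principal} ones contributes zero: dimension counts on the $S^2$-factors and orientation-reversing involutions kill the ``hidden'' faces (three or more points, or two interior points, colliding together) and the faces at infinity -- these are the Bott--Taubes and Kontsevich vanishing lemmas \cite{Bott-Taubes}. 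The principal faces -- two points joined by an edge of $D$, or two consecutive points of the circle, colliding -- do not vanish individually, but contracting the relevant edge identifies their combined contribution, summed over $D$ with weights $W(D)$, with the pairing of $W$ against an STU-combination of diagrams, which is zero because $W$ descends to $\D_n$. The only surviving boundary term is the \emph{anomalous} face, where a whole cluster of points collapses onto a point of $K$; its contribution is a universal constant times an integral along $K$ of a $1$-form built from the unit tangent $K'$, and one identifies this integral with a multiple of $\W(K)$. Choosing the $m_D$ so that $\sum_D m_D$ cancels the total anomalous coefficient makes the whole derivative vanish, so $V_W$ is an isotopy invariant.

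\emph{Stage 3 (Vassiliev order and symbol).} To see $V_W$ has order $\le n$ with symbol $W$, I would evaluate it on a knot with $j$ transverse double points and take the alternating sum over resolutions of the crossings. Each such difference localizes the integral near the double points; a combinatorial matching argument forces the $2n$ configuration points of a degree-$n$ diagram to be distributed among the $j$ double points in a way that is impossible for $j>n$ (giving order $\le n$), while for $j=n$ the interior vertices integrate out and the surviving sum is exactly $\langle W,C\rangle$, where $C$ is the chord diagram of the singular knot. Hence $V_W$ is Vassiliev of order exactly $n$ with symbol $W$.

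\emph{Stage 4 (surjectivity) and main obstacle.} Given an arbitrary Vassiliev invariant $V$ of order $n$, the STU (equivalently $4$T) relations show its symbol is a well-defined weight system; decomposing that symbol along the Hopf-algebra primitive structure expresses $V$, modulo lower order and products of lower-order invariants, as a combination of the $V_W$'s for primitive $W$, and downward induction on the order (base case: constants) then finishes. The main obstacle is Stage 2: both the vanishing of the hidden faces (requiring the careful symmetry arguments and Fulton--MacPherson blow-up bookkeeping of Bott--Taubes) and the exact identification of the anomalous face with a multiple of the writhe are substantial, whereas Stages 1, 3 and 4 are comparatively formal once Stage 2 is in hand.
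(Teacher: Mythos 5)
The paper does not prove this statement: it is quoted as a known theorem of Altschuler--Freidel, in the form given by D.~Thurston, and the only commentary the paper adds is the remark identifying $m_D\W(K)$ as the anomalous correction coming from the boundary of the fibre $(\widetilde{\pi}_k\circ p)^{-1}(K)$. So there is no internal proof to compare against; your outline should instead be measured against \cite{Bott-Taubes}, \cite{AF} and \cite{Thurston}, and as a road map of that argument it is accurate: Stokes on the compactified bundle, vanishing of hidden faces and faces at infinity by dimension counts and involutions, cancellation of principal faces against the STU relation (this is exactly why $W$ must descend to $\D_n$), the anomalous face, and then the symbol computation and downward induction for surjectivity. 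Two points of your sketch are loose enough to flag. First, in Stage 2 the anomalous face does not produce ``a multiple of $\W(K)$'' directly --- $\W(K)$ is a double integral, not the integral of a $1$-form along $K$; what the anomalous face produces is a $1$-form pulled back by the unit tangent map whose integral over the isotopy computes the \emph{variation} of the writhe (essentially C\u{a}lug\u{a}reanu--Fuller), so the cancellation happens at the level of $\frac{d}{du}$, which is all one needs. Second, in Stage 4 the statement as given in the paper sums only over diagrams of degree exactly $n$, so the surjectivity claim is really ``modulo invariants of lower order,'' and your induction must be run with some care about what ``obtained in this way'' means at each stage; this imprecision is already present in the paper's formulation, but your proof should state explicitly that the base case and the inductive step recover the invariant only up to lower-order corrections, which are then absorbed recursively. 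Neither point is a genuine gap in the intended argument, and your identification of Stage 2 (hidden-face vanishing plus the anomaly) as the substantial part is correct.
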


\begin{remark}
    The term $m_D\W(K)$ in the above theorem provides a correction term, known as the anomalous correction, which accounts for the integral of $\omega_D$ on the boundary of $(\widetilde{\pi}_k\circ p)^{-1}(K).$ 
\end{remark}

\section{Average integral values}

In this section, we will prove the main result of the paper. Recall that we are considering periodic orbits of a weak-mixing Axiom A flow $X^t$ on a basic set $\Lambda\subsetneq S^3.$ Given a trivalent diagram $D$, we define a function similar to $f_{D,K}$ which accounts for all periodic orbits of $X^t$ at once. With the notation from Section \ref{confspaceint}, let $k=k(D)$ and define 
\begin{align*}
 f_{D,X}\colon C(k,S^3)&\to \R\\
 (x_1,\ldots,x_k)&\mapsto (\alpha^*\varpi_D)_{(x_1,\ldots,x_k)}(X(x_1),\ldots X(x_k)).
\end{align*}
Let us now state the main result more precisely than in the introduction. To ease notation, for a measure $\nu$ on $\Lambda$, and $m\in \N$, let $\nu^m=\underbrace{\nu\times \cdots \times \nu}_{m \text{ times}}$ on $\Lambda^m.$
\begin{theorem}\label{mainthm1}
    Let $V_W:\Kn\to \R$ be a Vassiliev invariant of order $n$, and $\mu$ be the measure of maximal entropy for $X^t$ on $\Lambda.$ 
Then both of the following limits exist and the equalities hold.
    $$\lim_{T\to \infty}\frac{\sum_{\gamma\in \P_T}\W(\gamma)}{T^2\#\P_T}=\frac{1}{4\pi}\int f_{D_0,X}\,d\mu^2,$$
    
    $$\lim_{T\to \infty}\frac{\sum_{\gamma\in\P_T}V_W(\gamma)}{T^{k}\#\P_T}=\sum_{\substack{D\in TD_n \\ k(D)=k}}W(D)\int f_{D,X}\,d\mu^k,$$
where $k=\max\{k(D)\hbox{ : }D\in TD_n\hbox{ and }W(D)\neq 0\}.$

\end{theorem}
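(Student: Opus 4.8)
The plan is to reduce everything to a single statement about configuration space integrals $I_D$ and then push the known equidistribution of periodic orbit measures through these integrals. By Theorem~\ref{vassinv}, $V_W(\gamma) = \sum_{D\in TD_n} W(D) I_D(\gamma) - m_D \W(\gamma)$, and by Theorem~\ref{betterdef}, $\W(\gamma) = I_{D_0}(\gamma)$ where $D_0$ has $k(D_0)=2$. Since $k = \max\{k(D) : W(D)\neq 0\}\geq 2$, after dividing by $T^k\#\P_T$ the writhe term and all terms with $k(D) < k$ will vanish in the limit. So the heart of the matter is the following single estimate: for each trivalent diagram $D$ with $k = k(D)$,
\begin{equation*}
\lim_{T\to\infty} \frac{\sum_{\gamma\in\P_T} I_D(\gamma)}{T^k \#\P_T} = \int f_{D,X}\, d\mu^k,
\end{equation*}
and the writhe statement is the special case $D = D_0$, $k=2$, up to the factor $\tfrac{1}{4\pi}$.

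The key tool is Proposition~3.7 of \cite{KV}, which gives $I_D(\gamma) = \int_{C(k,S^1)} f_{D,\gamma}\, dt_1\cdots dt_k$. For a periodic orbit $\gamma$ of period $T_\gamma$, parametrize by flow time, so that $\gamma(t) = X^t(x_\gamma)$ and $\gamma'(t) = X(\gamma(t))$; rescaling the $k$ circle coordinates to run over $[0,T_\gamma]^k$ and accounting for the Jacobian of $\gamma$, one rewrites $I_D(\gamma)$ as $\int_{\Delta_\gamma} f_{D,X}(\gamma(t_1),\ldots,\gamma(t_k))\, dt_1\cdots dt_k$ over a $k$-dimensional torus of sidelength $T_\gamma$. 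Introducing the orbit measure $\mu_\gamma$ (the normalized arc-length/flow-time measure on $\gamma$, which has total mass $1$), this integral equals $T_\gamma^k \int_{C(k,\Lambda)} f_{D,X}\, d\mu_\gamma^k$ plus a diagonal contribution. Since $T_\gamma \in (T-1,T]$, the factor $T_\gamma^k / T^k \to 1$ uniformly, so modulo the diagonal issue the claim reduces to
\begin{equation*}
\lim_{T\to\infty} \frac{1}{\#\P_T}\sum_{\gamma\in\P_T} \int_{C(k,\Lambda)} f_{D,X}\, d\mu_\gamma^k = \int f_{D,X}\, d\mu^k.
\end{equation*}
By Bowen's equidistribution theorem, $\mu_\gamma \to \mu$ weak* as we average over $\P_T$; more precisely the averaged measure $\frac{1}{\#\P_T}\sum_{\gamma\in\P_T}\mu_\gamma$ converges weak* to $\mu$, hence so do its $k$-fold products on $\Lambda^k$. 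If $f_{D,X}$ extended continuously to all of $\Lambda^k$ this would be immediate.

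The main obstacle — exactly as in Contreras's argument and as flagged in the introduction — is that $f_{D,X}$ is \emph{singular along the diagonals} of $C(k,\Lambda)$ (the Gauss maps $h_{ij}$ blow up as $x_i \to x_j$), so neither the weak* convergence nor the reduction from $C(k,S^1)$ to the torus is automatic. This is where the analysis of \cite{KV} enters: they show that near the boundary strata of the compactified configuration space, $f_{D,K}$ behaves no worse than $f_{D_0,K}$, which in turn is controlled because $\Lambda\subsetneq S^3$ is a basic set — periodic orbits are uniformly transverse to themselves in a quantitative way coming from hyperbolicity (bounded geometry of local stable/unstable manifolds, a lower bound on the angle between $X(x_i)$ and $X(x_j)$ when $x_i,x_j$ are close but on different orbit segments, controlled by the hyperbolic splitting). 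Concretely, I would: (i) fix $\delta>0$ and split each integral over $C(k,\Lambda)$ into the region where all pairwise distances exceed $\delta$ and its complement; (ii) on the far-from-diagonal region, $f_{D,X}$ is continuous and bounded, so apply weak* convergence of $\mu_\gamma^k$ directly, noting that the boundary of the region has $\mu^k$-measure zero since $\mu$ is non-atomic (being the measure of maximal entropy of a flow with no fixed points); (iii) on the near-diagonal region, use the \cite{KV} bounds together with the hyperbolic transversality estimate to show the contribution is $O(\delta^\alpha)$ for some $\alpha>0$, uniformly in $\gamma\in\P_T$ and hence uniformly after averaging, and likewise that $\int f_{D,X}\,d\mu^k$ restricted to this region is $O(\delta^\alpha)$; (iv) also absorb the genuinely diagonal term from the reduction step — the "anomalous" boundary contribution — into this small-$\delta$ estimate, which is precisely why the $m_D\W$ correction in Theorem~\ref{vassinv} does not affect the leading-order $T^k$ asymptotics. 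Letting $T\to\infty$ and then $\delta\to 0$ yields the claimed limits. The writhe statement follows by the same argument applied to $D_0$ alone, without needing step (iv) in strong form since $f_{D_0,X}$ is the quantity for which integrability near the diagonal was already established in Contreras's work and in \cite{AKT}.
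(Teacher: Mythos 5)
Your overall architecture matches the paper's: reduce to the single claim $\lim_T A_D(T)/T^k=\int f_{D,X}\,d\mu^k$ for each diagram $D$, rewrite $I_D(\gamma)$ as an integral of $f_{D,X}$ against the $k$-fold product of the normalized orbit measure $\mu_\gamma$, and then combine an equidistribution statement away from the diagonal with the Komendarczyk--Voli\'c bounds near it. However, there is a genuine gap at the equidistribution step. The quantity that arises from $\sum_{\gamma\in\P_T}I_D(\gamma)$ is the \emph{average of products} $\mu_{T,k}=\frac{1}{\#\P_T}\sum_{\gamma\in\P_T}\mu_\gamma^k$, not the \emph{product of the average} $\bigl(\frac{1}{\#\P_T}\sum_{\gamma\in\P_T}\mu_\gamma\bigr)^k$. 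Your appeal to Bowen's theorem --- ``the averaged measure converges weak* to $\mu$, hence so do its $k$-fold products'' --- only controls the latter. These are genuinely different objects: if half the orbits had $\mu_\gamma\approx\delta_a$ and half $\mu_\gamma\approx\delta_b$, the average would be $\tfrac12(\delta_a+\delta_b)$ while the average of squares would be $\tfrac12(\delta_{(a,a)}+\delta_{(b,b)})$, which is not the square of the average. (This is also why the present problem differs from Contreras's linking-number setting, where the double sum over \emph{pairs} of orbits factors and the product of averaged measures really is what appears.) To conclude $\mu_{T,k}\to\mu^k$ one needs that \emph{most individual} orbit measures $\mu_\gamma$ are weak*-close to $\mu$, not merely that their average is. The paper obtains this from Kifer's large deviation theorem (Theorem \ref{largedev}): the proportion of $\gamma\in\P_T$ with $\mu_\gamma$ in a fixed compact set avoiding $\mu$ decays exponentially, and this is where the weak-mixing hypothesis earns its keep. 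Without some such concentration statement your step is unjustified and the argument does not close.

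A secondary, less serious discrepancy concerns the near-diagonal estimate. You attribute the control there to a hyperbolic transversality phenomenon (angle bounds between $X(x_i)$ and $X(x_j)$ on nearby distinct orbit strands, bounded geometry of stable/unstable manifolds). The paper does not use hyperbolicity here at all: it converts spatial closeness on the orbit to temporal closeness using only the uniform lower bound on $\|X\|$ (the flow is non-stationary on a compact set), and then invokes the Key Lemma of \cite{KV}, which bounds the integral of $|f_{D,X}|$ over a time-cube of side $O(R)$ around a point of the orbit by $MR$; summing over $t_1$ gives $\int_{B_R}|f_{D,X}|\,d\mu_\gamma^k\leq MR/\ell(\gamma)^{k-1}$, uniformly in $\gamma$, which is then fed into Proposition \ref{nested} together with the $L^1(\mu^k)$ integrability of $f_{D,X}$ and the fact that $\mu^k$ gives no mass to the fat diagonal. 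You correctly identify \cite{KV} as the source of the singular estimates, but the mechanism you describe is not the one used, and the transversality bound you posit is not actually established anywhere you cite.
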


By Theorems \ref{betterdef} and \ref{vassinv}, we can express $V_W$ and $\W$ in terms of the integrals $I_D$, so defining $$A_D(T):=\frac{\sum_{\gamma\in \P_T}I_D(\gamma)}{\#\P_T},$$ it suffices to prove the following.

\begin{theorem}\label{aveintvalsthm} 
For $n\in \N$ and $D\in TD_n$, $$\lim_{T\to \infty}\frac{A_D(T)}{T^k}=\int f_{D,X} \,d\mu^k,$$ where $k=k(D).$  
\end{theorem}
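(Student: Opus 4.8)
The plan is to rewrite $A_D(T)$ as an integral of $f_{D,X}$ against periodic-orbit measures, and then pass to the limit using Bowen's equidistribution theorem. Recall from Proposition 3.7 in \cite{KV} that $I_D(\gamma)=\int_{C(k,S^1)}f_{D,\gamma}\,dt_1\cdots dt_k$, and observe that for a periodic orbit $\gamma$ of least period $\ell(\gamma)$, parametrised by arc length of the flow, we have $\gamma'(t)=X(\gamma(t))$, so substituting $x_i=\gamma(t_i)$ turns this into an integral over the $k$-fold product of $\gamma$ (as a subset of $\Lambda$) of the function $f_{D,X}$. Precisely, if $\mu_\gamma$ denotes the $X^t$-invariant probability measure supported on $\gamma$, then after rescaling time by $\ell(\gamma)$ one gets $I_D(\gamma)=\ell(\gamma)^k\int_{\Lambda^k}f_{D,X}\,d\mu_\gamma^k$ up to the contribution of the diagonal terms $t_i=t_j$, which have measure zero in $C(k,S^1)$ but which must be controlled since $f_{D,X}$ blows up there. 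Since $\ell(\gamma)\in(T-1,T]$ for $\gamma\in\P_T$, the factor $\ell(\gamma)^k/T^k\to 1$ uniformly, so
$$\frac{A_D(T)}{T^k}=\frac{1}{\#\P_T}\sum_{\gamma\in\P_T}\frac{\ell(\gamma)^k}{T^k}\int_{\Lambda^k}f_{D,X}\,d\mu_\gamma^k=\int_{\Lambda^k}f_{D,X}\,d\Big(\frac{1}{\#\P_T}\sum_{\gamma\in\P_T}\frac{\ell(\gamma)^k}{T^k}\mu_\gamma^k\Big)+o(1).$$
By Bowen's theorem the measures $\frac{1}{\#\P_T}\sum_{\gamma\in\P_T}\mu_\gamma$ converge weak* to $\mu$, and a standard argument (the product of weak*-convergent empirical measures, together with the fact that the periods are asymptotically all equal to $T$) gives that the averaged product measures above converge weak* to $\mu^k$ on $\Lambda^k$. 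If $f_{D,X}$ were continuous and bounded on $\Lambda^k$ we would be done immediately; the issue is precisely that it is not.

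The heart of the argument is therefore the analysis of $f_{D,X}$ near the diagonals of $\Lambda^k$ (where two or more of the $x_i$ collide). Here I would invoke the work of Komendarczyk and Voli\'c \cite{KV}: they show that along knots obtained by closing up flow trajectories with short geodesics, the functions $f_D$ (equivalently $\alpha^*\varpi_D$ evaluated on the tangent/flow directions) extend to the Fulton--MacPherson boundary and in particular that the potential singularity of $f_{D,X}$ as $x_i\to x_j$ is of the same (mild) type as that of $f_{D_0,X}$, namely an integrable singularity controlled by the geometry of $\Lambda$ and the fact that the flow direction $X(x)$ is Lipschitz. Concretely I would: (i) show $f_{D,X}$ is bounded on the complement of any neighbourhood of the union of diagonals, and continuous there, so the weak* convergence handles that region; (ii) show that the integral of $|f_{D,X}|$ over an $\epsilon$-neighbourhood of the diagonals, with respect to $\mu^k$ and with respect to each $\mu_\gamma^k$ \emph{uniformly in $\gamma\in\P_T$}, tends to $0$ as $\epsilon\to 0$; then a standard $\epsilon$/$3$ argument combining (i) and (ii) closes the proof. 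For step (ii) the key geometric input is that $\mu$ (and the $\mu_\gamma$) cannot concentrate too fast near diagonals: since $\mu$ has no atoms and, more quantitatively, the flow moves points off any small ball in bounded time, one controls $\mu^k(\{\|x_i-x_j\|<\epsilon\})$ by the decay of the ``first return'' / local product structure estimates, combined with the bound $|f_{D,X}(x)|\lesssim \prod \|x_i-x_j\|^{-2}\cdot(\text{something }O(\|x_i-x_j\|))$ coming from the Gauss-map pullbacks together with the Lipschitz estimate $\|X(x_j)\times X(x_i)\|=\|X(x_j)\times(X(x_i)-X(x_j))\|\lesssim\|x_i-x_j\|$ exactly as in Contreras's treatment of $I$.

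In more detail on the singularity bound: each factor $\widetilde h_{ij}^*\omega$ contributes, in the worst case, a $\|x_i-x_j\|^{-2}$ blow-up, but when the edge $(i,j)$ connects two \emph{circle} vertices the evaluation on the flow directions $X(x_i),X(x_j)$ gains a factor $\|x_i-x_j\|$ by the cross-product identity above, so that edge contributes only $\|x_i-x_j\|^{-1}$; the free (internal) vertices and the fibre integration over their positions are handled as in \cite{KV}, where it is shown the fibre integral $\varpi_D$ is bounded after pulling back, so they do not worsen the singularity. Since a trivalent diagram has at most a one-dimensional family of collisions of any two circle vertices and the resulting $\|x_i-x_j\|^{-1}$ is integrable in $\R^3$ (indeed in any $\mu$ with the mild non-concentration above), the diagonal contributions vanish in the $\epsilon\to 0$ limit uniformly over $\P_T$. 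I expect step (ii) — the \emph{uniform}-in-$\gamma$ control of the periodic-orbit measures near the diagonals — to be the main obstacle, since weak* convergence alone gives no such uniformity; resolving it requires either a uniform non-concentration estimate for the measures $\mu_\gamma^k$ near diagonals (obtainable from hyperbolicity and the fact that periodic orbits of period $\le T$ cannot self-approach too closely relative to their length, or by a covering argument using the specification property) or, following Contreras, a direct estimate showing the near-diagonal part of $I_D(\gamma)$ is $o(\ell(\gamma)^k)$ with constants independent of $\gamma$. Once this is in place, assembling (i), (ii) and Bowen equidistribution yields Theorem \ref{aveintvalsthm}, and hence, via Theorems \ref{betterdef} and \ref{vassinv}, Theorem \ref{mainthm1}.
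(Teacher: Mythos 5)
Your overall architecture matches the paper's: rewrite $A_D(T)/T^k$ as $\int f_{D,X}$ against averaged periodic-orbit product measures, prove weak* convergence of those measures to $\mu^k$, and then control the singularity of $f_{D,X}$ near the fat diagonal uniformly in $\gamma$ using the estimates of Komendarczyk--Voli\'c (your step (ii) is exactly what the paper does, via the Key Lemma of \cite{KV}, which gives $\int_{B_R}|f_{D,X}|\,d\mu_\gamma^k\leq MR/\ell(\gamma)^{k-1}$ with $M$ independent of $\gamma$). However, there is one genuine gap in your argument for the weak* convergence step.

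You assert that Bowen's theorem, i.e.\ the weak* convergence $\frac{1}{\#\P_T}\sum_{\gamma\in\P_T}\mu_\gamma\to\mu$, together with ``a standard argument'' about products of weak*-convergent empirical measures, yields
$$\frac{1}{\#\P_T}\sum_{\gamma\in\P_T}\mu_\gamma^k\;\longrightarrow\;\mu^k.$$
No such standard argument exists: an average of $k$-fold self-products is not the $k$-fold product of the average, and convergence of the latter does not imply convergence of the former. (Toy example: if half the orbits had $\mu_\gamma\approx\delta_a$ and half $\mu_\gamma\approx\delta_b$, the averages would converge to $\tfrac12(\delta_a+\delta_b)$ while the averaged products would converge to $\tfrac12(\delta_{(a,a)}+\delta_{(b,b)})$, which is not the product measure.) This is also why the situation here differs from Contreras's, where the two orbits range over independent families $\P_S$ and $\P_T$ and the product of averages genuinely appears. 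What is needed is the stronger statement that \emph{all but a vanishing proportion} of individual orbits $\gamma\in\P_T$ have $\mu_\gamma$ close to $\mu$; the paper obtains this from Kifer's large deviation theorem (the proportion of $\gamma$ with $\mu_\gamma$ in a fixed compact set avoiding $\mu$ decays exponentially in $T$), which then immediately gives $\int\psi\,d\mu_\gamma^k\approx\int\psi\,d\mu^k$ for the overwhelming majority of orbits and hence convergence of the averaged products. Your proof would be repaired by replacing the appeal to Bowen equidistribution with this large deviation input (or any other result giving individual, rather than averaged, equidistribution of most orbits). The remainder of your outline --- integrability of $f_{D,X}$, the uniform near-diagonal estimate, and the $\epsilon$-argument combining the two --- is sound and is essentially the paper's proof.
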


To prove Theorem \ref{aveintvalsthm}, we essentially follow the method of Contreras \cite{contreras}, using control on $f_{D,X}$ obtained in \cite{KV} by Komendarczyk-Voli\'c.

For each $\gamma\in \P_T$, let $\nu_\gamma$ be the Borel measure given by $$\int \psi \,d\nu_\gamma=\int_0^{\ell(\gamma)}\psi(X^t(x_\gamma))\,dt,$$ where $\ell(\gamma)$ is the minimal period of $\gamma$, and $x_\gamma$ is any point on $\gamma.$ By definition of $f_{D,X}$, we have $$I_D(\gamma)=\int f_{D,X}\,d\nu_\gamma^k,$$ and therefore that $$\frac{A_D(T)}{T^k}=\int f_{D,X} \,d\nu_{T,k},$$ where $$\nu_{T,k}=\frac{\sum_{\gamma\in \P_T}\nu_\gamma^k}{T^k\#\P_T}.$$ For convenience, we will work with a different family of probability measures asymptotic to the $\nu_{T,k}$. If we set $\mu_\gamma=\frac{\nu_\gamma}{\ell(\gamma)},$ and $$\mu_{T,k}=\frac{\sum_{\gamma\in \P_T}\mu_\gamma^k}{\#\P_T},$$ the following clearly holds.

\begin{lemma}
    If either of the following limits exist, they both exist and the equality holds:
    $$\lim_{T\to\infty}\frac{A_D(T)}{T^k}=\lim_{T\to \infty}\int f_{D,X}\,d\mu_{T,k}.$$
\end{lemma}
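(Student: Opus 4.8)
The plan is to prove the last Lemma, which is the elementary reduction step: the limits $\lim_{T\to\infty} A_D(T)/T^k$ and $\lim_{T\to\infty}\int f_{D,X}\,d\mu_{T,k}$ coincide (when one exists). The point is simply to compare the two families of measures $\nu_{T,k}$ and $\mu_{T,k}$, which differ only by the rescaling $\nu_\gamma = \ell(\gamma)\mu_\gamma$ together with a factor of $T^k$ in the denominator, and to exploit that all periods $\ell(\gamma)$ for $\gamma\in\P_T$ lie in $(T-1,T]$.

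First I would write out, for a fixed $\gamma\in\P_T$, the identity $\nu_\gamma^k = \ell(\gamma)^k\,\mu_\gamma^k$ of measures on $\Lambda^k$, which is immediate from the definitions since taking $k$-fold products multiplies total masses. Summing over $\gamma\in\P_T$ and dividing by $T^k\#\P_T$ gives
$$
\nu_{T,k} \;=\; \frac{1}{T^k\#\P_T}\sum_{\gamma\in\P_T}\ell(\gamma)^k\,\mu_\gamma^k
\;=\; \frac{1}{\#\P_T}\sum_{\gamma\in\P_T}\Big(\frac{\ell(\gamma)}{T}\Big)^{\!k}\mu_\gamma^k.
$$
Next I would use that $\gamma\in\P_T$ forces $\ell(\gamma)\in(T-1,T]$, so $(\ell(\gamma)/T)^k\in\big((1-1/T)^k,\,1\big]$, hence
$$
\Big(1-\tfrac1T\Big)^{\!k}\le \Big(\tfrac{\ell(\gamma)}{T}\Big)^{\!k}\le 1
\qquad\text{for every }\gamma\in\P_T.
$$
Since each $\mu_\gamma^k$ is a probability measure and $f_{D,X}$ is integrable against each of them (indeed $\int f_{D,X}\,d\mu_\gamma^k = I_D(\gamma)/\ell(\gamma)^k$ is finite), integrating $f_{D,X}$ and taking the convex-combination structure of $\mu_{T,k}$ into account yields
$$
\Big|\int f_{D,X}\,d\nu_{T,k} - \int f_{D,X}\,d\mu_{T,k}\Big|
\;\le\; \Big(1-(1-\tfrac1T)^k\Big)\,\frac{1}{\#\P_T}\sum_{\gamma\in\P_T}\Big|\int f_{D,X}\,d\mu_\gamma^k\Big|.
$$
Because $\int f_{D,X}\,d\mu_{T,k} = \frac{1}{\#\P_T}\sum_{\gamma}\int f_{D,X}\,d\mu_\gamma^k$ stays bounded as $T\to\infty$ (this boundedness is exactly what the main argument will establish, or can be taken as a hypothesis local to the equivalence), and $1-(1-1/T)^k\to 0$, the right-hand side tends to $0$; one then concludes that the two sequences have the same limiting behavior, proving the Lemma.

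The only real subtlety — and hence the step I would flag — is making sure the comparison is legitimate \emph{before} the main theorem is in hand, i.e.\ not secretly assuming convergence: the cleanest fix is to phrase the bound so that it shows the \emph{difference} of the two quantities is $O(1/T)$ times $\frac{1}{\#\P_T}\sum_\gamma|\int f_{D,X}\,d\mu_\gamma^k|$, and then observe that the latter average is $O(1)$ because $f_{D,X}$ is bounded on the complement of a neighborhood of the diagonals and the Komendarczyk--Voli\'c estimates control its growth near the diagonals uniformly in $\gamma$ — exactly the integrability input used throughout Section 5. With that uniform bound, the equivalence of the two limits (existence of one implying existence and equality of the other) follows formally. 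Everything else is bookkeeping with finite sums and products of probability measures.
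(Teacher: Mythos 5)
Your proposal is correct and follows essentially the same route as the paper, which proves the lemma in one line from the sandwich $\frac{(T-1)^k}{T^k}\mu_{T,k}\leq \nu_{T,k}\leq \mu_{T,k}$ coming from $\ell(\gamma)\in(T-1,T]$. You are in fact more careful than the paper on the one genuine point of substance --- that for the signed, unbounded integrand $f_{D,X}$ the measure sandwich alone does not close the argument without a uniform bound on $\frac{1}{\#\P_T}\sum_{\gamma\in\P_T}\bigl|\int f_{D,X}\,d\mu_\gamma^k\bigr|$, which you correctly supply from the Komendarczyk--Voli\'c near-diagonal estimates used later in Section 5.
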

\begin{proof}
    This follows from the argument above and the fact that $$\frac{(T-1)^k}{T^k}\mu_{T,k}\leq \frac{\nu_{T,k}}{T^k}\leq \mu_{T,k}.$$
\end{proof}

To complete the proof of Theorem \ref{mainthm1} we will show that for the measure of maximal entropy $\mu,$ $\int f_{D,X}\, d\mu^k$ exists and is equal to $$\lim_{T\to \infty}\int f_{D,X}\,d\mu_{T,k}.$$

We first show that $\mu_{T,k}$ converges weak* to the product $\mu^k$. For this, we use the large deviation theory developed in this context by Kifer. Denote by $\M(X)$ the space of invariant Borel probability measures for $X,$ with the weak* topology.

\begin{theorem}[Kifer, \cite{kifer}]\label{largedev}
    Given a compact subset $\K\subset \M(X)$ with $\mu\notin \K,$ we have $$\limsup_{T\to \infty}\frac{1}{T}\log\frac{\#\{\gamma\in \P_T\hbox{ : }\mu_\gamma\in \K\}}{\#\P_T}<0.$$
\end{theorem}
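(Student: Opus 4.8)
The plan is to deduce the estimate from the thermodynamic formalism of the basic set $\Lambda$, realizing the assertion as the upper-bound half of a level-2 large deviation principle for the empirical measures $\mu_\gamma$. Bowen's symbolic dynamics presents $X^t|_\Lambda$ as a suspension of a topologically mixing subshift of finite type under a H\"older roof function (the weak-mixing hypothesis is exactly what guarantees mixing, so that the prime orbit counting is free of arithmetic obstructions). Within this framework I would work with the topological pressure
$$P(g)=\sup_{m\in\M(X)}\left(h(m)+\int g\,dm\right),\qquad g\in C(\Lambda),$$
where $h(m)$ denotes the entropy of the time-one map with respect to $m$. The key analytic input is that $P$ is realized as an orbit-counting growth rate: for H\"older $g$,
$$\lim_{T\to\infty}\frac1T\log\sum_{\gamma\in\P_T}\exp\left(\int_\gamma g\right)=P(g),$$
and in particular, taking $g=0$, $\tfrac1T\log\#\P_T\to P(0)=h_{\mathrm{top}}$, with $\mu$ the unique equilibrium state for $g=0$, i.e. the unique measure attaining $h(\mu)=h_{\mathrm{top}}$.

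Next I would introduce the free-energy functional governing the distribution of the $\mu_\gamma$. Since $\int_\gamma g=\ell(\gamma)\int g\,d\mu_\gamma$ and $\ell(\gamma)/T\to 1$ uniformly over $\P_T$, the displayed pressure limit yields
$$Q(g):=\lim_{T\to\infty}\frac1T\log\frac{1}{\#\P_T}\sum_{\gamma\in\P_T}\exp\left(T\int g\,d\mu_\gamma\right)=P(g)-h_{\mathrm{top}}.$$
The associated rate function is the convex conjugate of $Q$ on $\M(X)$; using the Legendre duality between pressure and entropy (the variational principle together with its converse, that $\sup_g(\int g\,dm-P(g))=-h(m)$ for invariant $m$) one identifies
$$I(m)=\sup_{g\in C(\Lambda)}\left(\int g\,dm-Q(g)\right)=h_{\mathrm{top}}-h(m)\geq 0.$$
Because $\mu$ is the unique measure of maximal entropy, $I(m)=0$ if and only if $m=\mu$.

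From here the conclusion follows from the large deviation upper bound on compact sets,
$$\limsup_{T\to\infty}\frac1T\log\frac{\#\{\gamma\in\P_T:\mu_\gamma\in\K\}}{\#\P_T}\leq-\inf_{m\in\K}I(m).$$
Since the entropy map $m\mapsto h(m)$ is upper semicontinuous for the expansive flow $X^t|_\Lambda$, the rate function $I$ is lower semicontinuous, so the infimum over the compact set $\K$ is attained at some $m_0\in\K$. As $\mu\notin\K$ we have $m_0\neq\mu$, whence $I(m_0)>0$ and $\inf_{m\in\K}I(m)>0$; this is exactly the claimed strict negativity.

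The main obstacle is establishing the upper bound itself in the infinite-dimensional space $\M(X)$, rather than the soft final step. I would obtain it by a covering argument: cover $\K$ by finitely many weak* basic neighbourhoods, each determined by finitely many H\"older test functions, and on each neighbourhood apply an exponential Chebyshev inequality driven by the corresponding $Q(g)$, bounding the count of orbits whose averages lie in the neighbourhood by $\exp(T(\sup_{m\in\K}h(m)-h_{\mathrm{top}}+\epsilon))$; summing the finitely many contributions and shrinking the neighbourhoods gives the bound. The delicate points are the uniformity of $\ell(\gamma)/T\to1$ and the interchange of the limit defining $Q(g)$ with the Chebyshev estimate, both of which are controlled by Bowen's periodic-orbit counting estimates for subshifts of finite type applied under the symbolic coding.
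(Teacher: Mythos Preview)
The paper does not prove this statement at all: it is quoted verbatim as a theorem of Kifer \cite{kifer} and used as a black box to deduce Theorem~\ref{measconv}. There is therefore no ``paper's own proof'' to compare your sketch against.

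That said, your outline is a faithful account of how such large deviation upper bounds are established in this setting, and is in the spirit of Kifer's original argument: identify the free energy $Q(g)=P(g)-h_{\mathrm{top}}$ via the periodic-orbit pressure formula, recognise its Legendre transform on $\M(X)$ as $I(m)=h_{\mathrm{top}}-h(m)$ through the variational principle, and combine the exponential Chebyshev/covering argument with upper semicontinuity of entropy (expansiveness of $X^t|_\Lambda$) to get a strictly positive infimum of $I$ over the compact $\K\not\ni\mu$. The points you flag as delicate---uniformity of $\ell(\gamma)/T\to 1$ on $\P_T$ and the passage from H\"older test functions to arbitrary continuous ones---are exactly the places where care is needed, and you have located them correctly. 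For the purposes of this paper, however, all of this is subsumed in the citation.
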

An appropriate choice of $\K$ in Theorem \ref{largedev} now gives $\mu_{T,k}\to \mu^k.$
\begin{theorem}\label{measconv}
    The measures $\mu_{T,k}$ converge weak* to the product $\mu^k.$
\end{theorem}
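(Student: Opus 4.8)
The plan is to deduce weak* convergence of $\mu_{T,k}$ to $\mu^k$ from the large deviation estimate of Theorem \ref{largedev}, together with the known equidistribution of a single periodic orbit measure toward $\mu$ (Bowen's theorem). First I would fix a bounded continuous function $\psi$ on $\Lambda^k$; since finite linear combinations of products $\psi_1\otimes\cdots\otimes\psi_k$ with each $\psi_i\in C(\Lambda)$ are dense in $C(\Lambda^k)$ in the sup norm (Stone--Weierstrass), it suffices to treat $\psi=\psi_1\otimes\cdots\otimes\psi_k$, where
$$\int\psi\,d\mu_\gamma^k=\prod_{i=1}^k\int\psi_i\,d\mu_\gamma,$$
and $\int\psi\,d\mu^k=\prod_{i=1}^k\int\psi_i\,d\mu$. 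So I need to show that the average over $\gamma\in\P_T$ of $\prod_i\int\psi_i\,d\mu_\gamma$ converges to $\prod_i\int\psi_i\,d\mu$.

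Next I would introduce, for $\delta>0$, the set $\K_\delta$ of invariant probability measures $\lambda\in\M(X)$ with $\big|\int\psi_i\,d\lambda-\int\psi_i\,d\mu\big|\le\delta$ for all $i=1,\dots,k$. This is a closed (hence compact, since $\M(X)$ is compact) subset of $\M(X)$, and its complement $\M(X)\setminus\K_\delta$ is an open neighbourhood-free region; more to the point, $\K_\delta^c$ is contained in a compact set not containing $\mu$ — precisely, take $\widetilde{\K}_\delta=\{\lambda:\big|\int\psi_{i_0}\,d\lambda-\int\psi_{i_0}\,d\mu\big|\ge\delta\text{ for some }i_0\}$, which is compact (closed in $\M(X)$) and omits $\mu$. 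Theorem \ref{largedev} applied to $\widetilde{\K}_\delta$ gives that the proportion of $\gamma\in\P_T$ with $\mu_\gamma\in\widetilde{\K}_\delta$ decays exponentially, in particular tends to $0$. For the remaining orbits, $\mu_\gamma\in\K_\delta$, so $\big|\int\psi_i\,d\mu_\gamma-\int\psi_i\,d\mu\big|<\delta$ for every $i$, and a telescoping estimate bounds $\big|\prod_i\int\psi_i\,d\mu_\gamma-\prod_i\int\psi_i\,d\mu\big|$ by $k\delta\prod_i(\|\psi_i\|_\infty+\delta)$, which is small with $\delta$. Splitting the average over $\P_T$ according to whether $\mu_\gamma\in\widetilde{\K}_\delta$ and letting $T\to\infty$ then $\delta\to0$ yields
$$\lim_{T\to\infty}\int\psi\,d\mu_{T,k}=\int\psi\,d\mu^k.$$

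The main obstacle, such as it is, is checking that $\widetilde{\K}_\delta$ is a legitimate input to Theorem \ref{largedev}: one must verify it is compact and does not contain $\mu$. Compactness is immediate because the maps $\lambda\mapsto\int\psi_i\,d\lambda$ are weak* continuous, so $\widetilde{\K}_\delta$ is closed in the compact space $\M(X)$; and $\mu\notin\widetilde{\K}_\delta$ by the strict inequality in its definition. A secondary point is that Theorem \ref{largedev} is stated for a fixed compact set, whereas here $\widetilde{\K}_\delta$ varies with $\delta$ — but since we send $T\to\infty$ first for each fixed $\delta$, this causes no trouble. The rest is the elementary product estimate and the density reduction via Stone--Weierstrass, both routine.
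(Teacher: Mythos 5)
Your argument is correct and follows essentially the same route as the paper: both apply Kifer's large deviation theorem (Theorem \ref{largedev}) to a compact set of invariant measures that the test function separates from $\mu$, then split the average over $\P_T$ into an exponentially small bad part and a good part controlled by a direct estimate. The only difference is cosmetic --- you first reduce to product test functions $\psi_1\otimes\cdots\otimes\psi_k$ via Stone--Weierstrass, whereas the paper works directly with a general $\psi\in C(\Lambda^k)$ and defines its bad set through $m\mapsto\int\psi\,dm^k$ (implicitly using the weak* continuity of $m\mapsto m^k$, which is itself typically verified by exactly your Stone--Weierstrass reduction).
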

\begin{proof}
    Let $\psi\in C(M^k,\R),$ and fix $\epsilon>0$. Let $\K$ be the compact set $$\K=\bigg\{m\in\M(X)\hbox{ : }\bigg|\int\psi\,dm^k-\int\psi\,d\mu^k\bigg|\geq \epsilon\bigg\}.$$ Then by Theorem \ref{largedev},
    $$\int\psi\, d\mu_{T,k}=\frac{\sum_{\gamma\in\P_T,\,\mu_\gamma\notin \K}\int \psi\,d\mu^k}{\#\P_T}+O(e^{-cT})$$ for some $c>0.$ Since 
    \begin{multline*}
    \frac{\sum_{\gamma\in\P_T,\,\mu_\gamma\notin \K}\int \psi\,d\mu_\gamma^k}{\#\P_T}=(1-O(e^{-cT}))\int\psi\,d\mu^k\\
    +\frac{\sum_{\gamma\in\P_T,\,\mu_\gamma\notin \K}\left(\int \psi\,d\mu_\gamma^k-\int \psi\,d\mu^k\right)}{\#\P_T}, 
    \end{multline*}
    we see that $$\int\psi\,d\mu^k-\epsilon\leq\liminf_{T\to\infty}\int\psi\,d\mu_{T,k}\leq \limsup_{T\to\infty}\int\psi\,d\mu_{T,k}\leq \int\psi\,d\mu^k+\epsilon.$$ Since $\psi$ and $\epsilon$ were arbitrary, the proof is complete.
\end{proof}

The next step is to consider integrability of $f_{D,X}$, which is discussed in Section 4.1 of \cite{KV}.

\begin{lemma}[\cite{KV}]\label{integrability}
    The function $f_{D,X}$ is in $L^1(\Lambda^k,\mu^k).$
\end{lemma}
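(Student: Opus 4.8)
The plan is to prove integrability of $f_{D,X}$ by reducing to the behaviour of $f_{D,X}$ near the "small diagonal" where configuration points collide, and controlling this blow-up using the estimates from Komendarczyk--Voli\'c. First I would recall from Section~\ref{confspaceint} that $f_{D,X}(x_1,\dots,x_k)$ is obtained by evaluating the pulled-back and fibre-integrated form $\alpha^*\varpi_D$ on the vector field $X$ at each point, and that the only source of non-integrability is the Gauss maps $h_{ij}$, whose pullbacks $h_{ij}^*\omega$ develop singularities as $x_i\to x_j$ for edges $(i,j)\in E(D)$. Away from all diagonals $f_{D,X}$ is smooth (hence bounded on compact pieces), so the issue is purely local near the locus where some subset of the first $k$ points coincide. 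Since $\Lambda$ is compact and $X$ is a nowhere-zero $C^1$ vector field, along a single periodic orbit (or more generally along the flow direction) the relevant geometry is exactly the one analysed in \cite{KV}: points $x_i, x_j$ both lying on a flow line, at arc-length separation $\sim t$, have $\|x_i - x_j\|\sim |t|$ and $X(x_i)\times X(x_j) = O(t)$, so the integrand $f_{D_0,X}$-type factor behaves like $O(1/|t|)$ rather than $O(1/|t|^2)$. This is the content of the Komendarczyk--Voli\'c analysis showing $f_D$ near the boundary of configuration space behaves like $f_{D_0}$.

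The key steps, in order: (1) stratify $C(k,S^3)$ — or rather a neighbourhood of $\Lambda^k$ in it — by the partition of $\{1,\dots,k\}$ recording which coordinates are close together, and observe $\mu^k$ of the "diagonal" strata is zero (the measure of maximal entropy is non-atomic), so only a neighbourhood of each stratum matters. (2) On such a neighbourhood, change to coordinates consisting of a "base point" on $\Lambda$ for each cluster plus relative coordinates; because $\mu$ disintegrates along local flow boxes with the flow direction carrying Lebesgue measure, the relative coordinate within a cluster that lies along the flow has a smooth density, and it is precisely integration in this variable that must absorb the singularity. (3) Invoke the pointwise bound from \cite{KV}: for each edge $(i,j)$ the factor contributes at worst $\|x_i-x_j\|^{-1}$ (up to a bounded factor), uniformly over $\Lambda$, using that $\Lambda$ is $C^1$-embedded and $X$ is Lipschitz so $X(x_i)$ and $X(x_j)$ are nearly parallel when $x_i,x_j$ are close along a flow line. (4) Count the powers: in a cluster of $r$ colliding points, $\omega_D$ restricted to that cluster, after fibre integration over the $3s$ free-vertex directions, is a form whose evaluation on $X$'s gives a function bounded by a product over the $\le r-1$ "internal" edges of the cluster of $\|\cdot\|^{-1}$ terms; integrating against the smooth density in the $r-1$ one-dimensional flow-relative coordinates (the other relative directions only improve matters, being integrated against a bounded density on a bounded region) yields a convergent integral by the elementary fact that $\int_0^\delta \prod (\text{linear})^{-1}$ is finite when the tree structure of the cluster's edges matches — which it does, since in a trivalent diagram the induced subgraph on a cluster of circle-vertices plus attached free vertices has the right edge count.

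The main obstacle I expect is step~(4): making the power-counting rigorous. One must be careful that fibre integration over the free (internal) vertices does not \emph{worsen} the singularity — a priori integrating $h_{ij}^*\omega$ over a free vertex could introduce logarithms or stronger blow-up — and that the tree/forest structure of edges within a cluster genuinely gives at most one inverse-distance factor per "independent" relative coordinate, so that the total singularity in the flow-direction variables is integrable. This is exactly where the detailed bounds of \cite{KV} (their Section~4.1, Lemma~\ref{integrability} as cited) do the real work: they establish the analogue of the $f_{D_0}$ estimate, namely that after all fibre integrations the worst-case local behaviour of $f_{D,X}$ near a depth-$r$ collision is $O\!\left(\prod_{e}\|x_{i(e)}-x_{j(e)}\|^{-1}\right)$ over a spanning set of edges, with no logarithmic corrections, uniformly over the compact set $\Lambda$. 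Granting that bound, the remaining integration is routine: cover $\Lambda^k$ by finitely many flow-box charts, apply the disintegration of $\mu^k$, and in each chart reduce to a finite sum of integrals of the shape $\int_{[-\delta,\delta]^{k-1}} \prod_{e}|\text{(linear functional of the }t\text{-variables)}|^{-1}\,dt$ weighted by a bounded density, each of which converges because the functionals involved are linearly independent (reflecting the acyclicity of the relevant subgraphs). Hence $f_{D,X}\in L^1(\Lambda^k,\mu^k)$.
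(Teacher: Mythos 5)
First, a point of comparison: the paper does not prove this lemma at all. It is imported directly from Section~4.1 of \cite{KV}, with only a remark that the result there is more general (integrability holds for any measure invariant under the product flow). So any self-contained argument is necessarily ``different from the paper''; the question is whether yours is correct, and it is not.

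The decisive gap is the final convergence claim in your step~(4). You reduce to integrals of the shape $\int_{[-\delta,\delta]^{k-1}}\prod_{e}|\ell_e(t)|^{-1}\,dt$, with the $\ell_e$ linearly independent linear functionals of the flow-time variables, and assert these converge as an ``elementary fact''. They do not: already a single factor gives $\int_{-\delta}^{\delta}|t|^{-1}\,dt=\infty$, and linear independence only converts the product integral into a product of such divergent one-dimensional integrals. Hence a bound of the form $|f_{D,X}|\lesssim\prod_{e}\|x_{i(e)}-x_{j(e)}\|^{-1}$ over a spanning tree, integrated against Lebesgue measure in the flow-relative coordinates, does not establish integrability. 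The true mechanism lies elsewhere, in exactly the two places you set aside: along a single orbit segment the triple-product numerator vanishes to one order higher than the Lipschitz estimate you quote (this is why Bott--Taubes integrals converge for smooth knots and why the Key Lemma of \cite{KV} yields a bound of order $R$ rather than a divergent quantity), while for nearby points on \emph{distinct} orbit segments the saving comes from the decay of $\mu$ on small transverse balls via the local product structure ($\mu(B(x,r))\le Cr^{1+\delta}$ with $\delta>0$ since $\Lambda$ is not a single closed orbit) --- that is, the transverse directions you dismiss as ``only improving matters'' are what make the integral finite. A secondary issue: the cross-product cancellation $X(x_i)\times X(x_j)=O(\|x_i-x_j\|)$ underlying your per-edge bound is only available for edges joining two circle vertices, where the form is evaluated on $X$; for edges touching free vertices the Gauss form blows up like $\|x_i-x_j\|^{-2}$ and must be tamed by the fibre integration, which is precisely the part you defer to \cite{KV}. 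Since the one step you do not defer is the one that fails, the proposal does not prove the lemma.
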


\begin{remark}
    The result proved in \cite{KV} is much more general than Lemma \ref{integrability}. The flow $X$ need not be Axiom A, and $f_{D,X}$ is integrable with respect to any measure invariant under the product flow. 

    %The proof consists of covering $\Lambda^k$ by flow-boxes, and showing that $f_{D,X}$ is uniformly bounded on each one.
\end{remark}
We will complete the proof of Theorem \ref{mainthm1} with an application of the following observation. Let $\Delta_k(\Lambda)$ be the fat diagonal in $\Lambda^k$, i.e. $$ \Delta_k(\Lambda)=\{(x_1,\ldots, x_k)\in \Lambda^k\hbox{ : }x_i=x_j\hbox{ for some }i\neq j\}.$$

\begin{proposition}\label{nested}
    If there exist nested open neighbourhoods $\{B_R\}_{R>0}$ of $\Delta_k(\Lambda)$ such that $\bigcap_{R>0}B_R=\Delta_k(\Lambda)$ and $$\lim_{T\to \infty}\int_{B_R} f_{D,X}\,d\mu_{T,k}=0$$ for sufficiently small $R$, then the following limit exists and the equality holds: $$\lim_{T\to \infty}\int f_{D,X}\,d\mu_{T,k}=\int f_{D,X}\,d\mu^k.$$
\end{proposition}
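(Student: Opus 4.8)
The statement is a soft measure‑theoretic one, and the plan is the standard ``excise a shrinking neighbourhood of the singular set'' argument; writing $f=f_{D,X}$ and $F_R=\Lambda^k\setminus B_R$ throughout, the point is to handle $f$ on $F_R$ by the weak* convergence of Theorem~\ref{measconv} and $f$ on $B_R$ by the hypothesis together with the integrability of Lemma~\ref{integrability}.

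First I would record that $\mu^k(\Delta_k(\Lambda))=0$. Since $X$ is non-singular, the measure of maximal entropy $\mu$ on $\Lambda$ has no atoms --- it has positive entropy unless $\Lambda$ is a single closed orbit, in which case it is the normalised length measure on a circle --- so each set $\{x_i=x_j\}$ with $i\neq j$ is $\mu^k$-null by Fubini, and $\Delta_k(\Lambda)$ is a union of finitely many of these. As $f\in L^1(\Lambda^k,\mu^k)$ by Lemma~\ref{integrability} and $\bigcap_{R>0}B_R=\Delta_k(\Lambda)$, dominated convergence (with dominating function $|f|$) then gives $\int_{B_R}|f|\,d\mu^k\to 0$, and in particular $\int_{B_R}f\,d\mu^k\to 0$, as $R\to 0$.

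Now fix $\epsilon>0$. I would choose $R$ small enough that the hypothesis $\int_{B_R}f\,d\mu_{T,k}\to 0$ holds and that $\bigl|\int_{B_R}f\,d\mu^k\bigr|<\epsilon$, and --- shrinking $R$ again if necessary --- that $B_R$ is a continuity set for $\mu^k$, i.e.\ $\mu^k(\partial B_R)=0$. (This is automatic for the neighbourhoods one uses in practice, such as $B_R=\{x\in\Lambda^k:\min_{i\neq j}d(x_i,x_j)<R\}$, whose boundaries lie in distinct level sets of a fixed continuous function and so are $\mu^k$-null for all but countably many $R$.) The set $F_R$ is compact and disjoint from $\Delta_k(\Lambda)$, hence contained in $\Lambda^k\setminus\Delta_k(\Lambda)$, on which $f$ is continuous, so $f|_{F_R}$ is bounded and continuous; consequently the bounded function $\mathbbm{1}_{F_R}f$ (set to $0$ off $F_R$) has its discontinuity set contained in $\partial B_R$ and is therefore continuous $\mu^k$-almost everywhere. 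By the portmanteau theorem applied to $\mu_{T,k}\to\mu^k$ we get $\int_{F_R}f\,d\mu_{T,k}\to\int_{F_R}f\,d\mu^k$ as $T\to\infty$, whence, splitting $\int f\,d\mu_{T,k}=\int_{B_R}f\,d\mu_{T,k}+\int_{F_R}f\,d\mu_{T,k}$ and letting $T\to\infty$,
\[
\lim_{T\to\infty}\int f\,d\mu_{T,k}=\int_{F_R}f\,d\mu^k=\int f\,d\mu^k-\int_{B_R}f\,d\mu^k .
\]
The last term is at most $\epsilon$ in absolute value, and since the left-hand side does not depend on $R$ and $\epsilon$ was arbitrary, the limit exists and equals $\int f\,d\mu^k$.

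All the substance is in the hypothesis, not in the deduction above: $f_{D,X}$ is genuinely unbounded near the fat diagonal --- already for $D_0$ it behaves like a negative power of $\|x_i-x_j\|$ as two configuration points collide in $S^3$ --- so Theorem~\ref{measconv} cannot be fed $f_{D,X}$ directly, and the role of the proposition is only to reduce matters, after throwing away a shrinking neighbourhood of $\Delta_k(\Lambda)$, to the well-behaved ``bulk'' part. I therefore expect the genuine obstacle --- to be treated separately, by adapting Contreras's near-diagonal estimates --- to be the verification of that hypothesis, $\int_{B_R}f_{D,X}\,d\mu_{T,k}\to 0$, which should come from combining the Komendarczyk--Voli\'c control on $f_{D,X}$ near the boundary of configuration space with a quantitative bound on the mass that $\mu_{T,k}$ places near $\Delta_k(\Lambda)$. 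Within the argument above the only delicate point is arranging $B_R$ to be a $\mu^k$-continuity set, and that is harmless.
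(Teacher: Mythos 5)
Your argument is the same excision scheme the paper sketches (show $\mu^k(\Delta_k(\Lambda))=0$, handle the bulk by weak* convergence from Theorem \ref{measconv}, handle the tail by the hypothesis and Lemma \ref{integrability}), and it is correct in substance; the paper's own proof is only a two-sentence sketch, so your write-up is essentially a fleshed-out version of it. One genuine difference: you obtain $\mu^k(\Delta_k(\Lambda))=0$ from non-atomicity of $\mu$ plus Fubini, which is softer and more self-contained than the paper's appeal to the quantitative ball-covering estimate of Lemma 9.7 of \cite{colessharp}; for the purposes of this proposition your route suffices, since only the qualitative statement is needed.

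The one place your deduction does not quite prove the proposition \emph{as stated} is the continuity-set step. For an arbitrary nested family of open neighbourhoods $\{B_R\}$ with $\bigcap_R B_R=\Delta_k(\Lambda)$, it is not true that all but countably many $B_R$ satisfy $\mu^k(\partial B_R)=0$: the boundaries need not lie in disjoint level sets of a single function, and one can arrange, say, $B_R=N_R(\Delta_k(\Lambda))\cup\bigl(N_R(C)\setminus C\bigr)$ for a closed nowhere dense $C$ with $\mu^k(C)>0$, so that $C\subseteq\partial B_R$ for every $R$. Your parenthetical correctly observes that the issue evaporates for sublevel-set families, and indeed the family actually used in the paper, $B_R=\bigcup_{x}\{x\}\times B(x,R)^{k-1}$, is the sublevel set $\{\max_{i\ge 2}d(x_i,x_1)<R\}$, whose boundaries are disjoint for distinct $R$; so the application is unaffected. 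To prove the proposition at its stated level of generality you would instead replace $\mathbbm{1}_{F_R}f$ by a continuous cutoff supported away from a metric neighbourhood of $\Delta_k(\Lambda)$ (using compactness of $F_R$ to fit such a neighbourhood inside $B_R$), which removes any reference to $\partial B_R$. As you say, the analytic content of the section lies in verifying the hypothesis $\int_{B_R}f_{D,X}\,d\mu_{T,k}\to 0$, which the paper does via the Key Lemma of \cite{KV}.
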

\begin{proof}
The proof in Lemma 9.7 of \cite{colessharp} can be applied here to show $$\mu^k(\Delta_k(\Lambda))=0.$$ The essential argument is that one can cover the diagonal with products of small balls whose $\mu$ measure decays exponentially fast with the diameter. Using this fact, along with Theorem \ref{measconv} and Lemma \ref{integrability}, one can complete the proof.
\end{proof}
Set $B_R=\bigcup_{x\in \Lambda}\{x\}\times \underbrace{B(x,R)\times \cdots \times B(x,R)}_{k-1\text{ times}}.$ These sets are clearly nested and limit to the diagonal in the required way.

Fix a periodic orbit $\gamma$, and parametrise it as a curve $\gamma(t)=X^t(x_\gamma)$ where $x_\gamma$ is some point on $\gamma$. For every $0\leq t_1\leq \ell(\gamma),$ and $R>0$ define 
$$E_{\gamma}(t_1,R)=\{(t_2,\ldots,t_k)\in [0,\ell(\gamma)]^{k-1}\hbox{ : }\|\gamma(t_i)-\gamma(t_1)\|< R\hbox{ for all }i\}\hbox{, and}$$
$$F_\gamma(t_1,R)=\{(t_2,\ldots,t_k)\in [0,\ell(\gamma)]^{k-1}\hbox{ : }|t_i-t_1|< R\hbox{ for all }i\}.$$
Then we have that $$\int_{B_R} |f_{D,X}|\,d\mu_\gamma^k=\frac{1}{\ell(\gamma)^k}\int_0^{\ell(\gamma)}\left(\int_{E_{\gamma}(t_1,R)}|f_{D,X}(\gamma(t_1),\ldots, \gamma(t_k))|\,dt_2\ldots dt_k\right)dt_1.$$ Since $\Lambda$ is compact and $X$ non-stationary, there is $L>0$ such that $L\leq \|\frac{dX^t}{dt}\|.$ The above then tells us that
\begin{align*}
    \int_{B_R} |f_{D,X}|\,d\mu_\gamma^k &\leq \frac{1}{\ell(\gamma)^k}\int_0^{\ell(\gamma)}\left(\int_{F_\gamma(t_1,\frac{R}{L})}|f_{D,X}(\gamma(t_1),\ldots, \gamma(t_k))|\,dt_2\ldots dt_k\right)dt_1\\
    &\leq \frac{1}{\ell(\gamma)^k}\int_0^{\ell(\gamma)}\int_{-\frac{R}{L}}^{\frac{R}{L}}\cdots \int_{-\frac{R}{L}}^{\frac{R}{L}}|f_{D,X}(\gamma(t_1+s_1),\ldots, \gamma(t_1+s_k))|\,d\textbf{s}\,dt_1.
\end{align*}
The proof of the Key Lemma in \cite{KV} shows that there is some $M>0$ such that for $R$ sufficiently small, and for all $\gamma, t_1,$
$$\int_{-\frac{R}{L}}^{\frac{R}{L}}\cdots \int_{-\frac{R}{L}}^{\frac{R}{L}}f_{D,X}(\gamma(t_1+s_1),\ldots, \gamma(t_1+s_k))\,d\textbf{s}\leq MR.$$
The same argument applies to show that $$\int_{-\frac{R}{L}}^{\frac{R}{L}}\cdots \int_{-\frac{R}{L}}^{\frac{R}{L}}|f_{D,X}(\gamma(t_1+s_1),\ldots, \gamma(t_1+s_k))|\,d\textbf{s}\leq MR.$$By the above, we then have $$\int_{B_R} |f_{D,X}|\,d\mu_\gamma^k\leq \frac{MR}{\ell(\gamma)^{k-1}},$$ and therefore that $$\left|\int_{B_R} f_{D,X}\,d\mu_{T,k}\right|\leq \frac{MR}{(T-1)^{k-1}}.$$ 
Therefore $\lim_{T\to \infty}\int_{B_R} f_{D,X}\,d\mu_{T,k}=0$ and the proof is complete.


\begin{thebibliography}{9}
\bibitem{AKT} J. Aldinger, I. Klapper and M. Tabor, \textit{Formulae for the calculation and estimation of writhe}, Journal of Knot Theory and its Ramifications \textbf{4}, pp 343-372, 1995.
\bibitem{AF} D. Altschuler and L. Freidel, \textit{On universal Vassiliev invariants}, Communications in Mathematical Physics \textbf{170}, pp 41-62, 1995.
\bibitem{Bar-Natan} D. Bar-Natan, \textit{On the Vassiliev knot invariants}, Topology \textbf{34}, pp 423-472. 1995.
\bibitem{Bott-Taubes} R. Bott and C. Taubes, \textit{On the self-linking of knots}, Journal of Mathematical Physics \textbf{35}, pp5247-5287, 1994.
\bibitem{Bowen} R. Bowen, \textit{Periodic orbits for hyperbolic flows}, American Journal of Mathematics \textbf{94}, pp 1-30, 1972.
\bibitem{CDM} S. Chmutov, S. Duzhin and J. Mostovoy, \textit{Introduction to Vassiliev knot invariants}, Cambridge University Press, 2012.
\bibitem{colessharp} S. Coles and R. Sharp, \textit{Helicity, linking and the distribution of null-homologous orbits for Anosov flows}, Nonlinearity \textbf{36}, pp 21-58, 2023.
\bibitem{contreras} G. Contreras, \textit{Average linking numbers of closed orbits of hyperbolic flows}, Journal of the London Mathematical Society \textbf{51}, pp 614-624, 1995.
\bibitem{franks} J. Franks \textit{Nonsingular Smale flows on $S^3$}, Topology \textbf{24}, pp 265-282, 1985.
\bibitem{fuller} F. Fuller, \textit{The writhing number of a space curve}, Proceedings of the National Academy of Sciences USA \textbf{68}, pp 815-819, 1971. 
\bibitem{fultonmacpherson} W. Fulton and R. MacPherson, \textit{Compactification of confitguration spaces}, Annals of Mathematics \textbf{139}, pp 183-225, 1994.
\bibitem{kifer} Y. Kifer, \textit{Large deviations in dynamical systems and stochastic processes}, Transactions of the American 
Mathematical Society \textbf{321}, pp. 505–524, 1990.
\bibitem{KV} R. Komendarczyk and I. Voli\'c, \textit{Volume-preserving vector fields and finite type invariants}, Ergodic Theory and Dynamical Systems \textbf{36}, pp 832-859, 2016.
\bibitem{plante} J. Plante and W. Thurston, \textit{Anosov flows and the fundamental group}, Topology \textbf{11}, pp 147-150, 1972.
\bibitem{derezende} K. de Rezende \textit{Smale flows on the three-sphere}, Transactions of the American Mathematical Society \textbf{303}, pp 283-310, 1987.
\bibitem{smale} S. Smale, \textit{Differentiable dynamical systems}, Bulletin of the American Mathematical Society \textbf{73}, pp 747-817, 1967.
\bibitem{Thurston} D. Thurston, \textit{Integral expressions for the Vassiliev knot invariants}, senior thesis, Harvard University, 1995. arxiv math.QA/9901110
\bibitem{vassiliev} V. A. Vassiliev, \textit{Cohomology of knot spaces}, Theory of Singularities and its Applications \textbf{1}, pp 23-69, 1990.
\bibitem{volic} I. Voli\'c, \textit{A survey of Bott-Taubes integration}, Journal of Knot Theory and Its Ramifications \textbf{16}, pp 1-42, 2005.





\end{thebibliography}
\end{document}